\tikzstyle{block}=[draw opacity=0.7,line width=1.4cm]
\newtheorem{theorem}{Theorem}
\newtheorem{corollary}{Corollary}
\newtheorem{definition}{Definition}
\newtheorem{example}{Example}
\newtheorem{lemma}{Lemma}
\newtheorem{problem}{Problem}
\newtheorem{remark}{Remark}
\numberwithin{equation}{section}
\numberwithin{theorem}{section}
\numberwithin{lemma}{section}
\numberwithin{corollary}{section}
\numberwithin{definition}{section}
\numberwithin{example}{section}
\numberwithin{remark}{section}
\numberwithin{property}{section}
\numberwithin{proposition}{section}
\newcommand{\CU}[2]{C^*\left\langle #1 \left | #2 \right. \right\rangle}
\newcommand{\NS}[3][1]{#2_#1,\ldots,#2_#3}
\newcommand{\NCS}[1][\mathcal{R}]{\mathcal{Z}[#1]}
\newcommand{\EZ}[1][\mathcal{R}]{\mathcal{EZ}[#1]}
\newcommand{\BH}{\mathbb{B}(\mathcal{H})}
\newcommand{\ball}[2][1]{\mathbf{B}_#1(#2)}
\newcommand{\HS}{\mathcal{H}}
\newcommand{\lift}[3][\pi]{\xymatrix{#2 \ar@{~>}[r]_{#1} & #3}}
\newcommand{\Minf}{\mathcal{M}_\infty}
\newcommand{\Li}[2][]{\mathcal{L}_{#1}(#2)}
\newcommand{\ad}[2]{#1#2#1^*}
\newcommand{\Ad}[2]{\mathrm{Ad}[#1](#2)}
\newcommand{\diag}[2][]{\mathrm{diag}_{#1}\left[ #2 \right]}
\newcommand{\N}[1]{\mathcal{N}(#1)}
\newcommand{\U}[1]{\mathbb{U}(#1)}
\newcommand{\PU}[1]{\mathbb{PU}(#1)}
\newcommand{\I}{\mathbb{1}}
\newcommand{\RR}{\mathbb{R}}
\newcommand{\CC}{\mathbb{C}}
\newcommand{\ZZ}{\mathbb{Z}}
\newcommand{\num}[1]{\mathbf{N}_{#1}}
\newcommand{\dft}[1]{\mathscr{F}_{#1}}
\newcommand{\TT}[1][1]{{\mathbb{T}^{#1}}}
\newcommand{\stf}[2][\hat{u}]{\mathscr{D}_{\mathbb{T}}[#1](#2)}
\newcommand{\disk}[1][2]{{\mathbb{D}^{#1}}}
\newcommand{\stff}[1]{\mathscr{D}(#1)}
\newcommand{\Cliff}[1]{\mathrm{Cliff}(#1)}
\begin{document}
\title[Local matrix homotopies]{
Local Matrix Homotopies and Soft Tori}
\author{Terry A. Loring}
\author{Fredy Vides}
\address
{Department of Mathematics and Statistics \newline
\indent The University of New Mexico, Albuquerque, NM 87131, USA.}
\curraddr[F.~Vides]{\sc School of Mathematics and Computer Science \newline
\indent \sc National Autonomous University of Honduras, Ciudad Universitaria, 2do Piso, \newline
\indent Edificio F1, Honduras, C.A.}
\email[T.~A.~Loring]{loring@math.unm.edu}
\email[F.~Vides]{vides@math.unm.edu}

\keywords{Matrix homotopy, relative lifting problems, matrix representation, 
noncommutative semialgebraic sets, K-theory, amenable C*-algebra, joint spectrum.}

\subjclass[2010]{46L85, 22D25  (primary) and 20F65, 65J22 (secondary).} 

\date{\today}

\begin{abstract}
We present solutions to local connectivity problems in matrix representations of the form $C([-1,1]^{N}) \to A_{n,\varepsilon} \leftarrow C_{\varepsilon}(\mathbb{T}^{2})$
for any $\varepsilon\in[0,2]$ and any integer $n\geq 1$, where $A_{n,\varepsilon}\subseteq M_n$ and where $C_{\varepsilon}(\mathbb{T}^{2})$ denotes the {\bf Soft Torus}. We 
solve the connectivity problems by introducing the so called toroidal matrix 
links, which can be interpreted as normal contractive matrix analogies of free homotopies in differential algebraic topology.

In order to deal with the locality constraints, we have combined some techniques introduced in this document with 
some techniques from matrix geometry, combinatorial optimization, classification and representation theory of 
C$^*$-algebras.
\end{abstract}
\maketitle

\section{Introduction}\label{intro}

In this document we study the solvability of some local connectivity problems via constrained normal matrix homotopies in C$^*$-representations of the 
form
\begin{equation}
 C(\TT[N])\longrightarrow M_n,
 \label{first_representation}
\end{equation}
for a fixed but arbitrary integer $N\geq 1$ and any integer $n\geq 1$. In particular we study local normal matrix homotopies which preserve 
commutativity and also satisfy some additional constraints, like being rectifiable or piecewise analytic.

We build on some homotopic techniques introduced initially by Bratteli, Elliot, Evans and Kishimoto in \cite{Unitary_Homotopies_Bratteli} 
and generalized by Lin in \cite{Approximate_Homotopy_Lin} and \cite{Aproximate_diagonalization_Lin}. We combine the homotopic techniques 
with some techniques introduced here and some other 
 techniques from matrix geometry and noncommutative topology developed by Loring \cite{Loring_asymptotic_K_theory,LoringNCSets}, Shulman \cite{LoringNCSets}, 
 Bhatia \cite{Bhatia_mat_book}, 
Chu \cite{Chu_symmetric_homotopy}, 
Brockett \cite{Brockett_matching}, Choi \cite{Choi_Free,Choi_cp_lifting}, Effros \cite{Choi_cp_lifting}, Exel \cite{EilersExelSoftTorus}, Eilers \cite{EilersExelSoftTorus}, 
Elsner \cite{Elsner_Perturbation_theorems}, 
Pryde \cite{Pryde_Inequalities,Pryde_operator_equations}, McIntosh \cite{Pryde_operator_equations} 
and Ricker \cite{Pryde_operator_equations}, to construct the so called {\em toroidal matrix links}, which we use to obtain the main theorems presented in section 
\S\ref{main_results}, and which consist on 
local connectivity results in matrix representations of the form \ref{first_representation} and also of the form
\begin{equation}
 C(\TT[N])\longrightarrow M_n \longleftarrow C([-1,1]^{N}).
 \label{second_representation}
\end{equation}

Toroidal matrix links can be interpreted as noncommutative analogies of free homotopies in algebraic topology and topological deformation theory, 
they are introduced in section \S\ref{matrix_varieties_and_links} together with some other matrix geometrical objects.

In \S\ref{C_0_technique} we present a connectivity technique which provides us with very important imformation on the 
local uniform connectivity in matrix representations of the form $C(\TT[2])\to M_n$.

Given $\delta>0$, a function $\varepsilon:\RR\to \RR^+_0$ and two matrices $x,y$ in a set $S \subseteq M_n$ such that 
$\|x-y\|\leq \delta$, by a {\bf \em $\varepsilon(\delta)$-local matrix homotopy} between $x$ and $y$, we mean a 
matrix path $X\in C([0,1],M_n)$ such that $X_0=x$, $X_1=y$, $X_t\in S$ and $\|X_t-y\|\leq \varepsilon(\delta)$ for each $t\in [0,1]$. 
We write $x\rightsquigarrow_{\varepsilon} y$ to denote that there is a $\varepsilon$-local matrix homotopy betweeen $x$ and $y$.

The motivation and inspiration to study local normal matrix homotopies which preserve commutativity in C$^*$-representations of the form 
\ref{first_representation} and \ref{second_representation}, came from mathematical physics \cite[\S3]{Loring_Top_insulators} and matrix approximation theory 
\cite{Chu_num_lin}.

The problems from mathematical physics which motivated this study are inverse spectral problems, which consist on finding for a certain set of matrices 
$\NS{X}{N}$ which {\em approximately satisfy} a set of polynomial constraints $\mathcal{R}(x_1,\ldots,$ $x_N)$ on $N$ NC-variables, a set of {\em nearby matrices} 
$\NS{\tilde{X}}{N}$ which approximate $\NS{X}{N}$ and exactly satisfy the constraints $\mathcal{R}(\NS{x}{N})$. The problems from matrix approximation 
theory that we considered for this study, are of the type that can be reduced to the study of the solvability conditions for approximate and exact joint 
diagonalization problems for $N$-tuples of normal matrix contractions.

Since the problems which motivated the research reported in this document can be restated in terms of the study local piecewise analytic connectivity in matrix 
representations of the form $C_\varepsilon(\TT[2])\to M_n \leftarrow C(\TT[N])$ and $C_\varepsilon(\TT[2])\to M_n \leftarrow C([-1,1]^{N})$, 
we studied several variations of problems of the form. 

\begin{problem}[Lifted connectivity problem]
\label{first_problem}
 Given $\varepsilon>0$, is there $\delta>0$ such that the following conditions hold? For any integer $n\geq 1$, some prescribed sequence of linear compressions 
 $\kappa_{n}:M_{mn}\to M_n$ for some $m\geq 1$, and any two families of $N$ pairwise commuting normal contractions 
 $X_1,\ldots,X_N$ and $Y_1,\ldots,Y_N$ in $M_n$ which satisfy the constraints 
 $\|X_j-Y_j\|\leq \delta$, $1\leq j\leq N$, there are two families of $N$ pairwise commuting normal contractions 
 $\tilde{X}_1,\ldots,\tilde{X}_N$ and $\tilde{Y}_1,\ldots,\tilde{Y}_N$ in 
 $M_{mn}$ which satisfy the relations: $\kappa(\tilde{X}_j)=X_j$, $\kappa(\tilde{Y}_j)=Y_j$ and 
 $\|\tilde{X}_j-\tilde{Y}_j\|\leq \varepsilon$, 
 $1\leq j\leq N$. Moreover, there are $N$ peicewise 
 analytic $\varepsilon$-local homotopies of normal contractions $\mathbf{X}^1,\ldots,\mathbf{X}^N\in C([0,1],M_{mn})$ 
 between the corresponding pairs $\tilde{X}_j$, $\tilde{Y}_j$ in $M_{mn}$, which satisfy the relations 
 $\mathbf{X}_t^j \mathbf{X}_t^k= \mathbf{X}_t^k \mathbf{X}_t^j$, for each 
 $1\leq j,k\leq N$ and each $0\leq t\leq 1$.
 \end{problem} 

By solving problem {\bf P.}\ref{first_problem} we learned about the local connectivity of arbitrary $\delta$-close $N$-tuples of pairwise commuting normal 
contractions $\NS{X}{N}$ and $Y_1,\ldots,$ $Y_N$ in $M_n$, which was the main motivation of the research reported here. We also obtained some results concerning 
to the geometric structure of the joint spectra (in the sense of \cite{Pryde_operator_equations}) of the $N$-tuples. 

For a given $\delta>0$, the study of the solvability conditions of problems 
of the form {\bf P.}\ref{first_problem} provided us with geometric information about local deformations of particular representations of the form 
$C(\TT[N])\to A_0:=C^*(\NS{U}{N})\subseteq M_n$ and $C(\TT[N])\to A_1:=C^*(\NS{V}{N})\subseteq M_n$, where $\NS{U}{N},\NS{V}{N}\in \U{n}$ are pairwise 
commuting unitary matrices such that $\|U_j-V_j\|\leq \delta$. By local deformations we mean a family $\{A_t\}_{t\in[0,1]}\subseteq M_n$ of abelian C$^*$-algebras, with 
$A_t:=C^*(\mathbf{X}_t^1,\ldots,\mathbf{X}_t^N)$ and where $\mathbf{X}_t^1,\ldots,\mathbf{X}_t^N\in C([0,1],\U{n})$ are 
$\varepsilon(\delta)$-local matrix homotopies between $\NS{U}{N}$ and $\NS{V}{N}$ for some function $\varepsilon:\RR\to \RR^+_0$.

The main results are 
presented in \S\ref{main_results}, in section 
\S\ref{main_results_piecewise_analytic} we use toroidal matrix links to obtain some local piecewise analytic connectivity results which are 
non-uniform in dimension. In section \S\ref{main_lifted_results} we derive an uniform approximate connectivity technique via matrix homotopy lifting and 
in section \S\ref{compressible_matrix_sets} we present a connectivity lemma that can be used to derive some uniform connectivity results betweeen matrix 
representations of finite sets of universal algebraic contractions, some of the details of these constructions will be presented in \cite{Vides_matrix_words}.

\section{Preliminaries and Notation}
\label{notation}

\subsection{Matrix Sets and Operations}
Given two elements $x,y$ in a C$^*$-algebra $A$, we will write $[x,y]$ and $\Ad{x}{y}$ to denote the operations 
$[x,y]:=xy-yx$ and $\Ad{x}{y}:=\ad{x}{y}$.

Given any C$^*$-algebra $A$ and any element $x$ in $M_n(A)$, we will denote by $\diag[n]{x}$ the operation defined by the expression
\begin{eqnarray*}
 M_{n}(A)&\rightarrow& M_n(A)\\
 x&\mapsto&\diag[n]{x}\\
\left(
\begin{array}{cccc}
 x_{11} & x_{12} & \cdots & x_{1n}\\
 x_{21} & x_{22} & \cdots & x_{2n}\\
 \vdots & \vdots &  \ddots & \vdots \\
 x_{n1} & x_{n2} & \cdots & x_{nn}
\end{array}
\right)
&\mapsto& 
\left(
\begin{array}{cccc}
 x_{11} & 0 & \cdots & 0\\
 0 & x_{22} & \cdots & 0\\
 \vdots & \vdots &  \ddots & \vdots \\
 0 & 0 & \cdots & x_{nn}
\end{array}
\right).
\end{eqnarray*}

Given a C$^*$-algebra $A$, we write $\N{A}$, $\mathbb{H}(A)$ and $\U{A}$ to denote the sets of normal, hermitian and unitary elements 
in $A$ respectively. We will write 
$\N{n}$, $\mathbb{H}(n)$ and $\U{n}$ instead of $\N{M_n}$, $\mathbb{H}(M_n)$ and $\U{M_n}$. A 
normal element $u$ in a C$^*$-algebra $A$ is called a partial unitary if the element $uu^*=p$ is an otrhogonal projection in $A$, i.e. $p$ 
satisfies the relations $p=p^*=p^2$, we denote by $\PU{A}$ the set of partial unitaries in $A$ and we write $\PU{n}$ instead of 
$\PU{M_n}$. 

We write $\mathbb{I}$, $\mathbb{J}$, $\TT$ and $\disk$ to denote the sets $\mathbb{I}:=[0,1]$, $\mathbb{J}=[-1,1]$, 
$\TT:=\{z\in\CC|\: |z|=1\}$ and $\disk:=\{z\in\CC|\:|z|\leq 1\}$. For some arbitrary matrix  set $S\subseteq M_n$ and some arbitrary compact set 
$\mathbb{X}\subset \CC$, we will write $S(\mathbb{X})$ to denote the subset of elements in $S$ described by the expression,
\begin{equation}
 S(\mathbb{X}):=\{x\in S|\sigma(x)\subseteq \mathbb{X}\},
\end{equation}
for instance we can write $\N{n}(\disk)$ to denote the set of nomal contractions. We will denote by $\Minf$ 
the C$^*$-algebra described by 
\begin{equation}
 \Minf:=\overline{\bigcup_{n\in\ZZ^+}M_n}^{\|\cdot\|}.
\end{equation}

In this document we write $\I_n$ to denote the identity matrix in $M_n$. The symbol $\num{n}$ will be used to denote the diagonal matrices
\begin{equation}
 \num{n}:=\diag{n,n-1,\ldots,2,1}.
\end{equation}

We will write  
$\Omega_n$ and $\Sigma_n$ to denote the unitary matrices defined by
\begin{eqnarray}
 \Omega_n:=e^{\frac{2\pi i}{n}\num{n}}&=&\diag{1,e^{\frac{2\pi i (n-1)}{n}},\ldots,e^{\frac{4\pi i}{n}},e^{\frac{2\pi i}{n}}}
\end{eqnarray}
and
\begin{eqnarray}
 \Sigma_n&:=&
 \left(
 \begin{array}{cc}
  0 & \I_{n-1}\\
  1 & 0
 \end{array}
 \right).
\end{eqnarray}

\begin{remark}
 The unitary matrices $\Omega_n$ and $\Sigma_n$ are related, by the equation
\begin{eqnarray*}
 \Omega_n=\dft{n}^*\Sigma_n\dft{n},
 \label{CCR_cyclic_coord_unitaries}
\end{eqnarray*}
where $\dft{N}:=\left(\frac{1}{\sqrt{N}}e^{\frac{2\pi i (j-1)(k-1)}{N}}\right)_{1\leq j,k\leq N}$ is the discrete Fourier transform (DFT) unitary matrix.
\end{remark}

Given an abstract object (group or C$^*$-algebra) $A$ we write $A^{\ast N}$ to denote 
the operation consisting on taking the free product of $N$ copies of $A$.

\begin{definition}[Local preservers]
Given a linear mapping $K:M_N\to M_n$, with $n\leq N$, and given a set $S\subseteq M_n$, we say that $K$ {\bf locally preserves} 
$S$ if there is $T\subseteq M_N$ such that $K(T)\subseteq S$, if in particular $K(T)\subseteq \N{n}$ we say that $K$ {\bf locally 
preserves normality}.
\end{definition}

\begin{example}
 The linear compression $\kappa:M_{2n}\to M_n$ defined by
 \[
  \kappa:
  \left(
  \begin{array}{cc}
   x_{11} & x_{12}\\
   x_{21} & x_{22}
  \end{array}
  \right)
  \mapsto
  x_{11}
 \]
locally preserves normality with respect to the set $T:=\{X\in M_{2n}|x_{11}\in \N{n}\}$.
\end{example}

\begin{example}
 The linear map $\phi:M_n\to M_n,x\mapsto \mathbf{D}x$ with $n\geq 1$ and $\mathbf{D}=\frac{1}{n}\diag{1,\ldots,n}$, locally preserves commutativity with respect to the set 
 $C^*(\mathbf{D})$.
\end{example}

\subsection{Joint Spectral Variation}
\subsubsection{Clifford Operators}
Using the same notation as Pryde in \cite{Pryde_Inequalities}, let $\RR_{(N)}$ denote the Clifford algebra over $\RR$ 
with generators $\NS{e}{N}$ and relations $e_ie_j=-e_je_i$ for $i\neq j$ and $e^2_i=-1$. Then $\RR_{(N)}$ is an 
associative algebra of dimension $2^N$. Let $S(N)$ denote the set $\mathscr{P}(\{1,\ldots,N\})$. Then the elements 
$e_S=e_{s_1}\cdots e_{s_k}$ form a basis when $S=\{\NS{s}{k}\}$ and $1\leq s_1< \cdots<s_k\leq N$. Elements of 
$\RR_{(N)}$ are denoted by $\lambda=\sum_{S}\lambda_Se_S$ where $\lambda_S\in \RR$. Under the inner product 
$\scalprod*{\lambda}{\mu}=\sum_S \lambda_S \mu_S$, $\RR_{(N)}$ becomes a Hilbert space with orthonormal basis $\{e_S\}$.

The {\em Clifford operator} of $N$ elements $\NS{X}{N}\in M_n$ is the operator 
defined in  $M_n\otimes \RR_{(N)}$   by
\[
 \Cliff{\NS{X}{N}}:=i\sum_{j=1}^N X_j\otimes e_j.
\]
Each element $T=\sum_S T_S\otimes e_S\in M_n\otimes \RR_{(N)}$ acts on elements $x=\sum_S x_S\otimes e_S\in \CC^n\otimes \RR_{(N)}$ 
by $T(x):=\sum_{S,S'}T_s(x_{S'})\otimes e_Se_{S'}$. So $\Cliff{\NS{X}{N}}\in M_n\otimes \RR_{(N)}\subseteq \Li{\CC^n\otimes \RR_{(N)}}$. By 
$\|\Cliff{\NS{X}{N}}\|$ we will mean the operator norm of $\Cliff{\NS{X}{N}}$ as an element of $\Li{\CC^n\otimes \RR_{(N)}}$. As observed by 
Elsner in \cite[5.2]{Elsner_Perturbation_theorems} we have that
\begin{equation}
 \|\Cliff{\NS{X}{N}}\|\leq \sum_{j=1}^N\|X_j\|.
 \label{Clifford_bound}
\end{equation}

\subsubsection{Joint Spectral Matchings}

It is often convenient to have $N$-tuples (or $2N$-tuples) of matrices with real spectra. For this purpose we use the following construction, 
initiated by McIntosh and Pryde. If $X=(\NS{X}{N})$ is a $N$-tuple of $n$ by $n$ matrices then we can always decompose $X_j$ in the form 
$X_j=X_{1j}+iX_{2j}$ where the $X_{kj}$ all have real spectra. We write 
$\pi(X):=(X_{11},\ldots,X_{1N},X_{21},\ldots,X_{2N})$ and call $\pi(X)$ a partition of $X$. If the $X_{kj}$ all commute we say that 
$\pi(X)$ is a commuting partition, and if the $X_{kj}$ are simultaneously triangularizable $\pi(X)$ is a triangularizable partition. If 
the $X_{kj}$ are all semisimple (diagonalizable) then $\pi(X)$ is called a semisimple partition.

We say that $N$ normal matrices $\NS{X}{N}\in M_n$ are {\em simultaneously diagonalizable} if there is a unitary matrix 
$Q\in M_n$ such that 
$Q^* X_jQ$ is diagonal for each $j=1,\ldots,N$. In this case, for $1\leq k\leq n$, let 
$\Lambda^{(k)}(X_j):=(Q^*X_jQ)_{kk}$ the $(k,k)$ element of $Q^*X_jQ$, and set 
$\Lambda^{(k)}(\NS{X}{N}):=(\Lambda^{(k)}(X_1),\ldots,\Lambda^{(k)}(X_N))\in \CC^N$. The set
\[
 \Lambda(\NS{X}{N}):=\{\Lambda^{(k)}(\NS{X}{N})\}_{1\leq k\leq N}
\]
is called the joint spectrum of $\NS{X}{N}$. We will write $\Lambda(X_j)$ to denote the $j$-component of $\Lambda(\NS{X}{N})$, in other words 
we will have that
\[
 \Lambda(X_j)=\diag{\Lambda^{(1)}(X_j),\ldots,\Lambda^{(N)}(X_j)}.
\]

The following theorem was proved in McIntosh, Pryde and Ricker \cite{Pryde_operator_equations}.

\begin{theorem}[McIntosh, Pryde and Ricker]
\label{Pryde_Joint_spectral_variation}
Let $X=(\NS{X}{N})$ and $Y=(Y_1,\ldots,$ $Y_N)$ be $N$-tuples of commuting $n$ by $n$ normal matrices. There exists a 
permutation $\tau$ of the index 
set $\{1,\ldots,n\}$ such that
\begin{equation}
 \|\Lambda^{(k)}(\NS{X}{N})-\Lambda^{(\tau(k))}(\NS{Y}{N})\|\leq e_{N,0}\|\Cliff{X_1-Y_1,\ldots,X_N-Y_N}\|
\end{equation}
for all $k\in \{1,\ldots,n\}$.
\end{theorem}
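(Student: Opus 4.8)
The plan is to reduce the statement to commuting \emph{Hermitian} tuples and then combine a Clifford resolvent estimate with Hall's marriage theorem. Since the $X_j$ are commuting normal matrices, Fuglede--Putnam forces $\mathrm{Re}\,X_j$ and $\mathrm{Im}\,X_j$ to pairwise commute, so I would pass to the commuting $2N$-tuple of Hermitian matrices $\tilde X:=(\mathrm{Re}\,X_1,\dots,\mathrm{Re}\,X_N,\mathrm{Im}\,X_1,\dots,\mathrm{Im}\,X_N)$, whose joint spectrum is identified with $\Lambda(X_1,\dots,X_N)$ under $\RR^{2N}\cong\CC^N$, and similarly for $Y$. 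A direct computation with the Clifford generators bounds $\|\Cliff{\tilde X-\tilde Y}\|$ by a fixed multiple, depending only on $N$, of $\|\Cliff{X_1-Y_1,\dots,X_N-Y_N}\|$; that factor, together with the one produced below, is what the constant $e_{N,0}$ is built to absorb. It then suffices to produce, for commuting Hermitian $M$-tuples $A,B$ of $n\times n$ matrices, a permutation $\tau$ of $\{1,\dots,n\}$ with $\|\Lambda^{(k)}(A)-\Lambda^{(\tau(k))}(B)\|\le c_M\|\Cliff{A-B}\|$ for all $k$.

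\emph{One-point estimate.} For a scalar $\lambda\in\RR^M$ all the mixed commutators vanish, so
\[
\Cliff{A_1-\lambda_1 I,\dots,A_M-\lambda_M I}^2=\left(\sum_{j=1}^{M}(A_j-\lambda_j I)^2\right)\otimes I ,
\]
which shows $\Cliff{A-\lambda I}$ is Hermitian, invertible exactly when $\lambda\notin\Lambda(A)$, and satisfies $\|\Cliff{A-\lambda I}v\|\ge\operatorname{dist}(\lambda,\Lambda(A))\,\|v\|$ for every $v$ (the right side being the least eigenvalue in modulus). Given $\mu\in\Lambda(B)$, I would take a joint unit eigenvector $w$ of $B$, $B_jw=\mu_jw$; then $\Cliff{B-\mu I}$ annihilates $w\otimes\xi$ for every Clifford vector $\xi$, so $\Cliff{A-B}(w\otimes\xi)=\Cliff{A-\mu I}(w\otimes\xi)$ and hence $\|\Cliff{A-B}\|\ge\operatorname{dist}(\mu,\Lambda(A))$. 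By symmetry this is the two-sided Hausdorff bound between $\Lambda(A)$ and $\Lambda(B)$.

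\emph{Upgrade to a bijection.} Put $r:=c_M\|\Cliff{A-B}\|$ and form the bipartite graph on $\{1,\dots,n\}\sqcup\{1,\dots,n\}$ joining $k$ to $k'$ when $\|\Lambda^{(k)}(A)-\Lambda^{(k')}(B)\|\le r$; a perfect matching of this graph is precisely the $\tau$ we want. I would verify Hall's condition by contradiction: if it failed for a set $S$ of $A$-indices, let $P$ be the joint spectral projection of $A$ onto $\{\Lambda^{(k)}(A):k\in S\}$ and $Q$ the joint spectral projection of $B$ onto the joint eigenvalues lying at distance $>r$ from every point of that set. A rank count gives $\operatorname{rank}P+\operatorname{rank}Q>n$, hence a unit vector $v\in\operatorname{ran}P\cap\operatorname{ran}Q$. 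Expanding $v$ in the joint eigenbases of $A$ and of $B$ and evaluating $\Cliff{A-B}$ on $v\otimes\xi$, the separation of the two supporting pieces of the spectrum should force $\|\Cliff{A-B}(v\otimes\xi)\|>\|\Cliff{A-B}\|\,\|v\otimes\xi\|$ for a suitable $\xi$ --- impossible. Taking $c_M$ (hence $e_{N,0}$) large enough to dominate the interaction terms is what makes this last step go through.

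\emph{Main obstacle.} I expect the delicate point to be exactly that final estimate: $v$ lies simultaneously in an $A$-spectral subspace and a $B$-spectral subspace whose eigendecompositions are not mutually orthogonal, so $\Cliff{A-B}(v\otimes\xi)$ is not block diagonal and one must genuinely control the cross-terms. This is where the dimensional constant $e_{N,0}$ --- equal to $1$ in the one-point estimate --- is forced, and where the representation theory of the Clifford algebra $\RR_{(N)}$ and a careful accounting of eigenvalue multiplicities come in, as in McIntosh--Pryde--Ricker.
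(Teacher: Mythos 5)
The paper offers no proof of this statement; it is quoted directly from McIntosh, Pryde and Ricker \cite{Pryde_operator_equations}, so your attempt can only be measured against that source. Your reduction to commuting Hermitian tuples is sound (commuting normal matrices do have pairwise commuting real and imaginary parts, and $\|Z_j\|\le\|\Cliff{Z_1,\dots,Z_N}\|$ for each $j$, so the two Clifford norms are comparable up to a factor depending only on $N$), and your one-point estimate is correct: for commuting Hermitian tuples one has $\Cliff{A-\lambda}^2=\bigl(\textstyle\sum_j(A_j-\lambda_j I)^2\bigr)\otimes 1$, so evaluating $\Cliff{A-B}$ on $w\otimes\xi$ for a joint eigenvector $w$ of $B$ yields the Hausdorff bound $\mathrm{dist}_H(\Lambda(A),\Lambda(B))\le\|\Cliff{A-B}\|$. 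The marriage-theorem scaffolding and the rank count $\mathrm{rank}\, P+\mathrm{rank}\, Q>n$ are also the right shape.

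The genuine gap is the step you yourself flag as the main obstacle: showing that a unit vector $v\in\mathrm{ran}\,P\cap\mathrm{ran}\,Q$, whose supporting $A$-spectral set and $B$-spectral set are at distance greater than $r$, contradicts $r\ge c_M\|\Cliff{A-B}\|$. Your proposed route --- expand $v$ in both joint eigenbases and evaluate $\Cliff{A-B}(v\otimes\xi)$ for a well-chosen $\xi$ --- does not go through by direct estimation. Already for $M=1$ (a single Hermitian matrix) the analogous quadratic-form argument $\braket{(A-B)v,v}=\braket{Av,v}-\braket{Bv,v}$ gives a lower bound only when the two spectral sets are linearly separated; when their convex hulls overlap, which is the generic situation for the sets produced by a failed Hall condition, the cross-terms cannot be dismissed and no single $\xi$ isolates them. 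What is actually required is the main theorem of \cite{Pryde_operator_equations}: a lower bound, with a constant depending only on $N$, for the Clifford--Sylvester operator on the off-diagonal corner, equivalently an estimate of the form $\|PQ\|\le (c_N/\delta)\,\|\Cliff{A-B}\|$ when the joint spectral sets attached to $P$ and $Q$ are separated by $\delta$. That estimate is established there by functional-calculus and Fourier-type methods in the Clifford module, and it is exactly where the constant $e_{N,0}$ originates. Without it, your verification of Hall's condition, and hence the existence of the permutation $\tau$, remains unproven; as written, your argument establishes only the permutation-free Hausdorff-distance version of the theorem.
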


In this theorem, $e_{N,0}$ is an explicit constant depending only on $N$ defined in 
\cite[(2.4)]{Pryde_operator_equations}.

\subsection{Amenable C$^*$-algebras and Bott elements}

The following lemma has been proved by H. Lin in \cite{Amenable_algebras_Lin}.

\begin{lemma}[H. Lin.]\label{Lin_Toroidal_Links_0}
 For any $\varepsilon>0$ and $d>0$, there exists $\delta>0$ satisfying the following: Suppose that $A$ is a unital C$^*$-algebra 
 and $u\in A$ is a unitary such that $\TT\backslash \sigma(u)$ contains an arc with length $d$. Suppose that $a\in A$ with $\|a\|\leq 1$ 
such that
\[
 \|ua-au\|<\delta.
\]
Then there is a self-adjoint element $h\in A$ such that $u=e^{ih}$,
\[
 \|ha-ah\|<\varepsilon \:\:\:\: \mathrm{and} \:\:\:\: \|e^{ith}a-ae^{ith}\|<\varepsilon
\]
for all $t\in\mathbb{I}$. If, furthermore, $a=p$ is a projection, we have
\[
 \left\|pup-p+\sum_{n=1}^\infty \frac{(iphp)^n}{n!}\right\|<\varepsilon.
\]

\end{lemma}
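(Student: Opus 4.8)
The plan is to build $h$ as a continuous logarithm of $u$ adapted to the spectral gap, and then to push the hypothesis $\|[u,a]\|<\delta$ through the continuous functional calculus to control, in turn, $\|[h,a]\|$, the commutators along the path $e^{ith}$, and the Bott-type expression. First I would use the arc of length $d$ in $\mathbb{T}\setminus\sigma(u)$: letting $\Gamma\subseteq\mathbb{T}$ denote the complementary closed arc, there is a real-valued $f\in C(\Gamma)$ with $e^{if(z)}=z$ on $\Gamma$ (a branch of $-i\log$ whose cut lies inside the gap), with $\|f\|_\infty$ bounded by a universal constant and — this is the point that must be watched — with Lipschitz constant at most $L(d):=\pi/\sin(d/2)$, since on $\Gamma$ the chordal and arclength metrics are comparable with a constant depending only on $d$. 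As $\sigma(u)\subseteq\Gamma$, the element $h:=f(u)$ is self-adjoint, satisfies $u=e^{ih}$, and has $\|h\|\le\|f\|_\infty$.

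The key estimate is that functional calculus turns approximate commutation with $u$ into approximate commutation with $h$, quantitatively. Given $\eta>0$, extend $f$ to an $L(d)$-Lipschitz function on all of $\mathbb{T}$ and approximate it uniformly within $\eta$ by a trigonometric polynomial $q(z)=\sum_{|k|\le M}c_k z^k$; a Fej\'er (or Jackson) estimate lets one take $M$ and $\sum_{|k|\le M}|c_k|$ depending only on $d$ and $\eta$. Then $\|h-q(u,u^*)\|<\eta$, and, using $\|[u^k,a]\|\le|k|\,\|[u,a]\|$ for a unitary $u$ and every $k\in\mathbb{Z}$, one gets $\|[q(u,u^*),a]\|\le C(d,\eta)\,\|[u,a]\|$. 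Hence $\|[h,a]\|\le 2\eta+C(d,\eta)\,\delta$, and choosing $\eta$ and then $\delta$ small — both depending only on $\varepsilon$ and $d$ — makes $\|[h,a]\|$ as small as desired, in particular $<\varepsilon$. The path estimate is then essentially free: with $w(t):=[e^{ith},a]$ we have $w(0)=0$ and $w'(t)=ih\,w(t)+i[h,a]e^{ith}$, so Duhamel's formula together with unitarity of $e^{ish}$ gives $\|[e^{ith},a]\|=\|w(t)\|\le t\,\|[h,a]\|\le\|[h,a]\|<\varepsilon$ for all $t\in[0,1]$.

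For the last assertion, with $a=p$ a projection, I would compare $pup=pe^{ih}p=\sum_{n\ge0}\tfrac{i^n}{n!}\,ph^np$ with the exponential of $iphp$ computed inside the corner $pAp$, namely $p+\sum_{n\ge1}\tfrac{(iphp)^n}{n!}=\sum_{n\ge0}\tfrac{i^n}{n!}(php)^n$. From $ph-php=-p[h,p]$ one obtains by induction on $n$ that $\|ph^np-(php)^n\|\le (n-1)\,\|h\|^{n-1}\,\|[h,p]\|$, and summing against $1/n!$ yields $\|pup-(p+\sum_{n\ge1}\tfrac{(iphp)^n}{n!})\|\le e^{\|h\|}\|[h,p]\|$. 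This is the stated inequality (up to the obvious sign in front of the sum), and it is made $<\varepsilon$ by applying the previous paragraph with $e^{-\|h\|}\varepsilon$ in place of $\varepsilon$, which only shrinks $\delta$ further.

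The one point that needs genuine care — the main obstacle — is the uniformity in the second paragraph: the degree $M$, the coefficient bound, and hence the final $\delta$ must be shown to depend only on $d$ and $\varepsilon$, and not on $A$, on the matrix size, or on where the missing arc sits in $\mathbb{T}$. This is exactly what the uniform Lipschitz bound $L(d)=\pi/\sin(d/2)$ of Step~1 together with a standard rate of trigonometric approximation provide. Everything else is a routine functional-calculus computation; in particular amenability of $A$ is not used anywhere — the whole argument is carried out by norm estimates and continuous functional calculus in an arbitrary unital C$^*$-algebra.
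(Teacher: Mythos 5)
The paper offers no proof of this lemma at all: it is quoted verbatim from Lin's book \cite{Amenable_algebras_Lin} and used as a black box, so there is nothing internal to compare your argument against. Your proof is correct and is, in substance, the standard one. The two points that actually carry the argument are both handled properly: the branch of the logarithm on the complementary arc $\Gamma$ has a Lipschitz constant (with respect to the chordal metric) bounded by $(2\pi-d)/(2\sin(d/2))\le \pi/\sin(d/2)$, a quantity depending only on $d$, and the Jackson-type approximation then yields a degree $M$ and a coefficient bound $\sum_{|k|\le M}|c_k|\le (2M+1)\|f\|_\infty$ depending only on $d$ and $\eta$, so that $\|[h,a]\|\le 2\eta+C(d,\eta)\,\delta$ with all constants independent of $A$ and of the location of the gap --- this is exactly the uniformity the lemma asserts. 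The Duhamel estimate $\|[e^{ith},a]\|\le t\|[h,a]\|$ and the induction $\|ph^np-(php)^n\|\le (n-1)\|h\|^{n-1}\|[h,p]\|$, summed against $1/n!$ to give the factor $e^{\|h\|}\le e^{2\pi}$, are both verified correctly. You are also right to flag the sign: as transcribed in the paper the last display reads $pup-p+\sum_{n\ge1}(iphp)^n/n!$, whereas the quantity your (correct) computation controls is $pup-p-\sum_{n\ge1}(iphp)^n/n!$, i.e.\ $pup\approx e^{iphp}$ computed in the corner $pAp$; the ``$+$'' is a typo inherited in the quotation. Your closing observation that amenability plays no role is also accurate --- the lemma is a pure functional-calculus estimate valid in any unital C$^*$-algebra.
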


The following lemma was proved by H. Lin in \cite{Aproximate_diagonalization_Lin} using L.\ref{Lin_Toroidal_Links_0}, since for any integer 
$n\geq 1$ and any $u\in \U{n}$, we will have that $\TT\backslash \sigma(u)$ contains an arc with length at least $2\pi/n$.

\begin{lemma}[H. Lin.]
 \label{Lin_Toroidal_Links}
 Let $\varepsilon>0$, $n\geq 1$ be an integer and $M>0$. There exists $\delta>0$ satisfying the following: For any finite set 
 $\mathscr{F}\subset M_n$ with $\|a\|\leq M$ for all $a\in \mathscr{F}$, and a unitary $u\in M_n$ such that
 \[
  \|ua-au\|<\delta \:\:\:\: \mathrm{for \:\: all} \:\:\:\: a\in\mathscr{F},
 \]
there exists a continuous path of unitaries $\{u(t)\}_{t\in\mathbb{I}}\subset M_n$ with $u(0)=u$ and $u(1)=\I_n$ such that
\[
 \|u(t)a-au(t)\|<\varepsilon \:\:\:\: \mathrm{for \:\: all} \:\:\:\: a\in\mathscr{F}.
\]
Morover,
\[
 Length(\{u(t)\})\leq 2\pi.
\]
\end{lemma}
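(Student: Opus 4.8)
The plan is to obtain L.\ref{Lin_Toroidal_Links} as a packaging of L.\ref{Lin_Toroidal_Links_0}: first normalize $\mathscr{F}$ so that it consists of contractions, then use the elementary observation that $\sigma(u)$ has at most $n$ points, so that $\TT\backslash\sigma(u)$ always contains an arc of length $d:=2\pi/n$, and finally convert the self-adjoint logarithm $h$ furnished by L.\ref{Lin_Toroidal_Links_0} into the one-parameter exponential path $u(t):=e^{i(1-t)h}$, whose endpoints, commutator estimates, and length are all immediate.

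For the constants: given $\varepsilon>0$, an integer $n\geq 1$ and $M>0$, apply L.\ref{Lin_Toroidal_Links_0} with $\varepsilon/M$ in place of $\varepsilon$ and with $d:=2\pi/n$ to produce $\delta_0>0$, and set $\delta:=M\delta_0$. Now suppose $\mathscr{F}\subset M_n$ with $\|a\|\leq M$ for all $a\in\mathscr{F}$ and $u\in\U{n}$ with $\|ua-au\|<\delta$ for all $a\in\mathscr{F}$. Since $\sigma(u)$ consists of at most $n$ points, it cuts $\TT$ into at most $n$ arcs, the longest of which, say $J$, has length at least $2\pi/n=d$. Choosing the branch of the logarithm with cut through the midpoint of $J$ produces $h\in\HH{n}$ with $u=e^{ih}$ and $\sigma(h)$ contained in an interval of length $2\pi-d$; normalizing the eigenangles (which are determined only modulo $2\pi$) so that this interval is as nearly symmetric about $0$ as the position of $J$ permits, one gets $\|h\|\leq 2\pi-d/2<2\pi$. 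The point is that this $h$ is precisely the element produced in the proof of L.\ref{Lin_Toroidal_Links_0}, and that it depends only on $u$ and $J$, not on any particular $a$. Hence, applying L.\ref{Lin_Toroidal_Links_0} to each contraction $a/M$, $a\in\mathscr{F}$ — for which $\|u(a/M)-(a/M)u\|=\|ua-au\|/M<\delta/M=\delta_0$ — yields $\|e^{ith}(a/M)-(a/M)e^{ith}\|<\varepsilon/M$, i.e. $\|e^{ith}a-ae^{ith}\|<\varepsilon$, for every $t\in\mathbb{I}$ and every $a\in\mathscr{F}$, all with the same $h$.

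Next define $u(t):=e^{i(1-t)h}$ for $t\in\mathbb{I}$. Each $u(t)$ is unitary, $t\mapsto u(t)$ is real-analytic and in particular norm-continuous, and $u(0)=e^{ih}=u$, $u(1)=\I_n$. Substituting $s:=1-t\in\mathbb{I}$ in the estimate above gives $\|u(t)a-au(t)\|<\varepsilon$ for all $a\in\mathscr{F}$ and all $t\in\mathbb{I}$. Finally $\tfrac{d}{dt}u(t)=-ih\,e^{i(1-t)h}$, which has norm $\|h\|$ for every $t$ because $h$ is self-adjoint and commutes with the unitary $e^{i(1-t)h}$; therefore $Length(\{u(t)\})=\int_0^1\|h\|\,dt=\|h\|\leq 2\pi-d/2<2\pi$, and a fortiori $Length(\{u(t)\})\leq 2\pi$, with a bound independent of $n$ once $J$ is fixed.

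The substantive analysis lives entirely inside L.\ref{Lin_Toroidal_Links_0}; what requires attention in this derivation is only the twofold uniformity. First, a single self-adjoint logarithm $h$ must serve every $a\in\mathscr{F}$ simultaneously, which holds because the branch cut is dictated by the gap $J$ of $\sigma(u)$ alone and not by $a$. Second, this gap-avoiding $h$ must still satisfy $\|h\|<2\pi$, which is what produces the (dimension-free) length bound. I expect the mild eigenangle/branch-cut bookkeeping behind the norm estimate on $h$ to be the only point needing any care; the endpoint identification, the passage from the single-element lemma to the finite set, and the length computation are all routine.
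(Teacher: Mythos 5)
Your proposal is correct and follows exactly the route the paper itself indicates: it cites Lin's result and justifies it via L.\ref{Lin_Toroidal_Links_0} together with the observation that $\TT\backslash\sigma(u)$ always contains an arc of length $2\pi/n$ for $u\in\U{n}$. You merely supply the details the paper leaves implicit (the rescaling by $M$, the independence of the branch logarithm $h$ from $a$, and the length computation $\ell=\|h\|\leq 2\pi$), all of which are handled correctly.
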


\begin{definition}({\bf The obstruction $Bott(u,v)$.})
\label{Loring_Bott_index_definition}
Given two unitaries in a $K_1$-simple real rank zero C$^*$-algebra $A$ that almost commute, the obstruction $Bott(u,v)$ is the 
Bott element associated to the two unitaries as defined by Loring in \cite{Loring_asymptotic_K_theory}. It is defined whenever $\|uv-vu\|\leq \nu_0$, 
where $\nu_0$ is a universal constant. It is defined as the $K_0$-class
\[
 Bott(u,v)=[\chi_{[1/2,\infty)}(e(u,v))]-
 \left[
 \left(
 \begin{array}{cc}
  1 & 0\\
  0 & 0
 \end{array}
 \right)
 \right],
\]
where $e(u,v)$ is a self-adjoint element of $M_2(A)$ of the form
\[
 e(u,v)=
 \left(
 \begin{array}{cc}
  f(v) & h(v)u+g(v)\\
  u^*h(v)+g(v) & 1-f(v)
 \end{array}
 \right),
\]
where $f$,$g$,$h$ are certain universal real-valued continuous functions on $\TT$.
\end{definition}

For details on the subject of $K$-theory for $C^*$-algebras the reader is referred to \cite{Rordam_book}. As observed by Bratteli, Elliot, 
Evans and Kishimoto in \cite{Unitary_Homotopies_Bratteli}, given a pair $u,v\in \U{A}$ we have that the obstruction $Bott(u,v)$ needs to vanish in order to be able to solve 
the problem $uvu^*\rightsquigarrow_{\varepsilon(\delta)} v$ by deforming $u\in \mathbb{U}_0(A)$ to $1$ continuously in $\U{A}$, when $\|uv-vu\|\leq \delta$.

\section{Matrix Varieties and Toroidal Matrix Links}
\label{matrix_varieties_and_links}
Let us denote by $\HS$ a universal separable Hilbert space, by $\BH$ the C$^*$-algebra of bounded operators on $\HS$, and 
for any given $S\subseteq \BH$ let us denote by $\ball[r]{S}$ the closed $r$-ball in $S$ defined by $\ball[r]{S}:=\{x\in S|\|x\|\leq r\}$.

Given some $N\in\ZZ^+$ and a set $\mathcal{R}(S)=\mathcal{R}(\NS{y}{N})$ of normed 
polynomial relations on the $N$-set $S:=\{\NS{y}{N}\}$ of NC-variables, we will call the set $\NCS$ 
described by
\begin{equation}
 \NCS:=\{\NS{x}{N}|\mathcal{R}(\NS{x}{N})\}
 \label{NC_Set_def}
\end{equation}
with $\NS{x}{N}\in \ball{\BH}$, a noncommutative semialgebraic set. 

\begin{example}
 As an example of normed $NC$-polynomial relations we can consider the set 
 $\mathcal{R}(x,y):=\{\|x^4-1\|\leq 10^{-10},\|y^7-1\|\leq 10^{-10},\|xy-yx\|\leq \frac{1}{8},xx^*=x^*x=1,yy^*=y^*y=1\}$.
\end{example}

Given a NC-semialgebraic set $\NCS$, we will use 
the symbol $\EZ$ to denote the universal C$^*$-algebra
\begin{equation}
 \EZ:=\CU{\NS{x}{N}}{\mathcal{R}(\NS{x}{N})},
 \label{environment_C_star_def}
\end{equation}
which we call the environment C$^*$-algebra of $\NCS$. For details on universal C$^*$-algebras described in terms of generators and relations 
the reader is referred to \cite{Loring_book}.

\begin{definition}[Semialgebraic Matrix Varieties]
\label{matrix_variety}
Given $J\in \ZZ^+$, a system 
of $J$ polynomials $\NS{p}{J}\in \Pi_{\braket{N}}=\mathbb{C}\Braket{\NS{x}{N}}$ in $N$ NC-variables $\NS{x}{N}\in \Pi_{\braket{N}}$ 
and a real number $\varepsilon\geq 0$, a particular matrix representation of the 
noncommutative semialgebraic set $\mathcal{Z}_{\varepsilon,n}(\NS{p}{J})$ 
described by 
\begin{equation}
 \mathcal{Z}_{\varepsilon,n}(\NS{p}{J}):=\Set{\NS{X}{N}\in M_{n} | \|p_j(\NS{X}{N})\|\leq \varepsilon, 1\leq j\leq J},
\end{equation}
will be called a {\bf $\varepsilon,n$-semialgebraic matrix variety} ($\varepsilon,n$-SMV), if $\varepsilon=0$ we can refer to the set as 
a {\bf matrix variety}. 
\end{definition}

\begin{example} As a first example, we will have that the 
set $\mathbf{Z_n}:=\{X\in M_n|\num{n}X$ $-X\num{n}=0\}$ is a matrix variety defined by the set with one $NC$-polynomial relation 
$\{\num{n}X-X\num{n}=0\}$. If for some $\delta>0$, we set now $\mathbf{Z}_{n,\delta}:=\{X\in M_n|\|[\num{n},X]\|\leq \delta\}$, the set $\mathbf{Z}_{n,\delta}$ is 
a matrix semialgebraic variety defined by the set with one normed $NC$-polynomial relation 
$\{\|\num{n}X-X\num{n}\|\leq \delta\}$.
\end{example}

\begin{example} Other example of a matrix semialgebraic variety, that has been useful to understand the geometric nature of the 
problems solved in this document, is described by the matrix set $\mathbf{Iso}_{\delta}(x,y)$, 
defined for some given $\delta\geq 0$ and any two normal contractions 
$x$ and $y$ in $M_n$, by the expression
\[
 \mathbf{Iso}_{\delta}(x,y):=
\left\{(z,w)\in\N{n}(\disk)\times\U{n}\left|\: 
\begin{array}{c}
 \|xw-wz\|=0 \\
 \|[z,y]\|=0\\
\|z-y\|\leq \delta
\end{array}
\right.
\right\}.
\]
\end{example}

\subsection{Toroidal Matrix Links}

\subsubsection{Finsler manifolds, matrix paths and toroidal matrix links}

\begin{definition}[Finsler manifold]
 A Finsler manifold is a pair $(M,F)$ where $M$ is a manifold and
$F : TM \to [0, \infty)$ is a function (called a Finsler norm) such that
\begin{itemize}
 \item $F$ is smooth on $TM\backslash \{0\}= \bigcup_{x\in M} \{T_x M \backslash \{0\}\}$,
 \item $F(v) \geq 0$ with equality if and only if $v = 0$,
 \item $F(\lambda v)$ = $\lambda F(v)$ for all $\lambda \geq 0$,
 \item $F(v + w) \leq F(v) + F(w)$ for all $w$ at the same tangent space with $v$.
\end{itemize}
\end{definition}

 Given a Finsler manifold $(M,F)$, the length of any rectifiable curve $\gamma : [a,b] \to M$ is given by the length functional
    \[L[\gamma] = \int_a^b F(\gamma(t),\partial_t \gamma(t))\,dt,\]
where $F(x, \cdot)$ is the Finsler norm on each tangent space $T_x M$.

The pair $(\mathcal{N},\|\cdot\|)$ is a Finsler manifold, where $\mathcal{N}$ denotes the set of normal matrices $\mathcal{N}$ (of any size) and 
$\|\cdot\|$ denotes the operator norm.

\begin{definition}[Matrix path curvature]
  Given a piecewise-$C^2$ matrix path $\gamma:[0,1]\to \mathcal{N}$, we define its curvature $\kappa[\gamma]$ to be
  \[
   \kappa[\gamma]:=\frac{1}{\|\partial_t\gamma(t)\|}\left\|\partial_t \left(\frac{\partial_t\gamma(t)}{\|\partial_t\gamma(t)\|}\right)\right\|.
  \]
 \end{definition}

\begin{definition}[Matrix flows] Given $n\geq 1$, a mapping $\phi:\mathbb{R^+_0}\times M_n \rightarrow M_n,$ $(t,x)\mapsto x_t$ will be called a 
matrix flow in this document. If we have in addition that $\sigma(x_t)=\sigma(x_s)$ for every $t,s\geq 0$, we say that the matrix 
flow is isospectral.
\end{definition}

\begin{definition}[interpolating path]
 Given two matrices $x$ and $y$ in $M_n$ and a matrix flow $\phi:\mathbb{I}\times M_n\rightarrow M_n$ such that $\phi_0(x)=x$ and $\phi_1(x)=y$, 
we say that the corresponding path $\{x_t\}_{t\in\mathbb{I}}:=\{\phi_t(x)\}_{t\in\mathbb{I}}\subseteq M_n$ is a solvent path for the interpolation 
problem $x\rightsquigarrow y$.
\end{definition}

\begin{definition}[$\circledast$ operation]
 Given two matrix paths $X,Y\in C([0,1],M_n)$ we write ${X\circledast Y}$ to denote the concatenation of $X$ and $Y$, which is 
 the matrix path defined in terms of $X$ and $Y$ by the expression,
 \[
  {X\circledast Y}_s:=
  \left\{
  \begin{array}{l}
   X_{2s},\:\: 0\leq s\leq \frac{1}{2},\\
   Y_{2s-1},\:\: \frac{1}{2}\leq s\leq 1.
  \end{array}
  \right.  
 \]
\end{definition}

\begin{definition}({\bf $\ell_{\|\cdot\|}$})
 Given a matrix path $\{x_t\}_{t\in\mathbb{I}}$ in $M_n$ we will write $\ell_{\|\cdot\|}(x_t)$ to denote the length of $\{x_t\}_{t\in\mathbb{I}}$ 
with respect to the operator norm which is defined by the expression
\[
 \ell_{\|\cdot\|}(x_t):=\sup \sum_{k=0}^{m-1}\|x_{t_{k+1}}-x_{t_{k}}\|,
\]
where the supremum is taken over all partitions of $\mathbb{I}$ as $0=t_0<\ldots<t_m=b$. If the function $x\in C(\mathbb{I},M_n)$ is a piecewise 
$C^1$ function, then
\[
 \ell_{\|\cdot\|}(x_t)=\int_{\mathbb{I}}\|\partial_tx_t\|dt.
\]
\end{definition}

\begin{definition}({\bf $\|\cdot\|$-flatness})
 A set $\mathcal{S}$ of $M_n$ is said to be $\|\cdot\|$-flat if any two points $x,y\in\mathcal{S}$ can be connected by 
a path $\{x_t\}_{t\in\mathbb{I}}\subseteq \mathcal{S}$ such that $\ell_{\|\cdot\|}(x_t)=\|x-y\|$.
\end{definition}

{\bf \begin{definition}[Toroidal matrix link] \label{matrix_links}
Given any two normal contractions $x,y$ in $M_n$, a toroidal matrix 
link is any piecewise analytic normal path $x_t:=\mathbb{K}[T_t(\mathbb{l}(x))]$ induced by a locally normal piecewise analytic 
matrix flow $T:\mathbb{I}\times M_{N}\rightarrow M_{N}$ 
with $N\geq n$, together with a locally normal compression $\mathbb{K}:M_N\rightarrow M_n$ with relative lifting map $\mathbb{l}:M_n\rightarrow M_N$, 
which satisfy the interpolating conditions $\mathbb{K}[T_0(\mathbb{l}(x))]=x$ 
and $\mathbb{K}[T_1(\mathbb{l}(x))]=y$ together with the constraints $\|\mathbb{K}[T_t(\mathbb{l}(x))]\|\leq 1$ for each $t\in\mathbb{I}$.
\end{definition}}

\begin{remark}
In the particular case where $[\mathbb{K}(T_t(\mathbb{l}(x))),\mathbb{K}(T_t(\mathbb{l}(y)))]=0$ for each $t\in\mathbb{I}$, whenever $[x,y]=0$, we call $T$ a 
toral matrix link.
\end{remark}

 \begin{remark} The {\em \bf curved nature} of the matrix varieties ({\em \bf as Finsler sub-manifolds of $\mathcal{N}$}) 
 whose local connectivity is studied in this document, induces an obstruction 
 to local connectivity via entirely {\em \bf flat} toroidal matrix links in general. The toroidal matrix links $\mathbf{T}\subset C([0,1],\mathcal{N})$ we have used to solve the connectivity problems which motivated 
 this study satisfy the constraint
  \[
   0\leq \kappa[T] \leq \frac{2}{\ell_{\|\cdot\|}(T)}, \:\: \forall T\in \mathbf{T}.
  \]
 \end{remark}

\subsection{Embedded matrix flows in solid tori}

Given some fixed but arbitrary $W\in \U{n}$, using the operation $\mathrm{diag}_n:M_n\rightarrow M_n$ 
one can define the mapping 
$\mathscr{D}:\U{n}\times M_n\rightarrow \disk$, determined by the expression.
\begin{eqnarray}
\U{n}\times M_n&\rightarrow& \disk \label{matrix_torus_definitions_1}\\
(W,x)&\mapsto&\stf[W]{x}\label{matrix_torus_definitions_2} \\
(W,x)&\mapsto&\{(\diag[n]{WxW^*})_{k,k}\}_{1\leq k\leq n}
\label{matrix_torus_definitions_3}
\end{eqnarray}
It is clear that $\diag{\stf[W]{x}}=\diag[n]{WxW^*}$ and that $\diag{\stf[\I_n]{x}}=\diag[n]{x}$, because of this when $W=\I_n$ we will 
write $\stff{x}$ instead of $\stf[\I_n]{x}$.

Given a matrix flow $\mathbb{I}\times \N{n}(\disk) \rightarrow \N{n}(\disk),(t,x)\mapsto X_t(x)$, one can identify $X$ with the set of 
flow lines in $\disk\times \TT$ determined by $\{(\stff{X_t(x)},e^{2\pi i t})\}_{t\in\mathbb{I}}$. The geometric picture determined by the mapping 
cylinder $\N{n}(\disk)\times \mathbb{I}\rightarrow \disk\times \TT,(x,t)\mapsto (\stff{X_t(x)},e^{2\pi i t})$ will be called the embedded matrix mapping 
cylinder relative to the flow $X$. We can think of the embedded matrix mapping cylinder in topological terms as a 
deformation described by 
the expression $\mathcal{D}_{X,Z_2}$, which is defined as
\begin{equation}
 \mathcal{D}_{X,Z_2}[Z_1\times \mathbb{I}]:= 
 \frac{(Z_1\times \mathbb{I}) \sqcup Z_2}{Z_1 \rightsquigarrow_{X_1} Z_2},
 \label{matrix_mapping_cylinder}
\end{equation}
where $Z_1$ and $Z_2$ are some prescribed matrix varieties such that $x\in Z_1$ and $X_1(x)\in Z_2$.

\begin{example}[Graphical example in $M_3$] \label{M3_graphical_example}
Let us set $\hat{u}_3:=e^{\frac{2\pi i}{3}f\left(\num{3}\right)}$ where $f\in C(\mathbb{I},\mathbb{I})$. For some prescribed $W_3\in \U{3}$, we can obtain a graphical example of 
a particular geometric picture of the computation of the embedded matrix mapping 
cylinder relative to the interpolating flow $\mathbf{U}$ which solves the problem $\hat{u}_3 \rightsquigarrow W_3\hat{u}_3W_3^*$. 

Let us set
\begin{eqnarray*}
 Z_1&:=&\{z\in\U{3}|[\hat{u}_3,z]=0\},\\
 Z_2&:=&\{z\in\U{3}|[W_3\hat{u}_3W_3^*,z]=0\}.
\end{eqnarray*}

Using projective methods, we can trace specific flow lines along the matrix flows corresponding to the 
dynamical deformation $\mathcal{D}_{\mathbf{U},Z_2}[Z_1\times \mathbb{I}]$, which solve the interpolation problem 
$\hat{u}_3 \rightsquigarrow W_3\hat{u}_3W_3^*$.

A particular (approximate) geometric picture of the matrix deformation induced by the toral matrix link $\{\mathbf{U}_t\}_{t\in\mathbb{I}}$ in $M_3$, projected in 
$\disk\times \TT$ for each $t\in\mathbb{I}$ via $\mathscr{D}_{\mathbb{T}}({\mathbf{U}_t})$ is presented in figures F.\ref{mapping_cylinder_1}-F.\ref{mapping_cylinder_3}.

\begin{figure}[!htb]
 \centering
\includegraphics[scale=0.28]{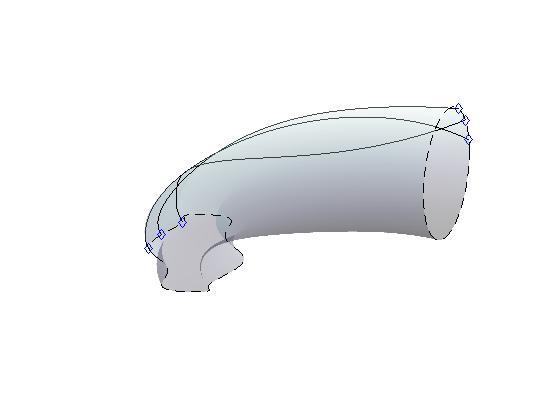}
\caption{Projected matrix mapping cylinder corresponding to the path $\mathbf{U}_{[0,\frac{1}{2}]}(\hat{u}_3)$ in $M_3$.}
\label{mapping_cylinder_1}
 \end{figure}
 \begin{figure}[!htp]
\centering
\includegraphics[scale=0.28]{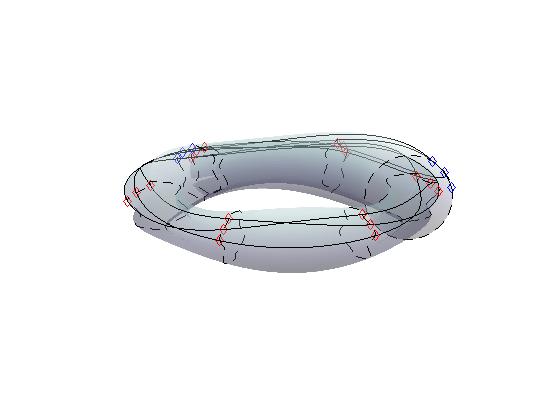}
\caption{Projected matrix mapping cylinder corresponding to the path $\mathbf{U}_{\mathbb{I}}(\hat{u}_3)$ in $M_3$.}
\label{mapping_cylinder_2}
 \end{figure}
\begin{figure}[!htb]
\centering
\includegraphics[scale=0.28]{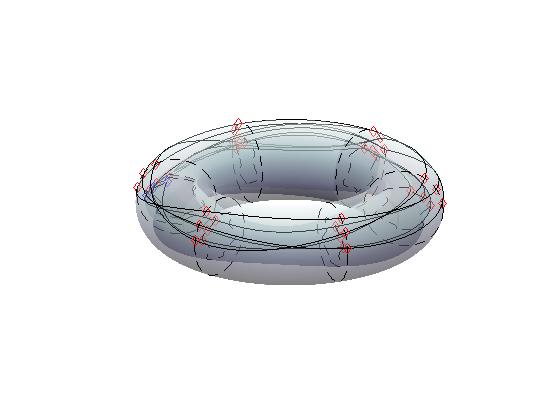}
\caption{Embedded matrix mapping cylinder corresponding to the path $\mathbf{U}_{\mathbb{I}}(\hat{u}_3)$ in $M_3$.}
\label{mapping_cylinder_3}
\end{figure}

Alternative methods to trace particular flow lines on mapping cylinders can be otained using matrix homotopies, this can be done using similar methods to 
the ones implemented in \cite{Chu_symmetric_homotopy}.
\end{example}

\subsection{Environment algebras}

\begin{definition}[Environment algebra (of a matrix algebra)]
Given a mat-rix algebra $A$ $\subseteq M_n$, a universal C$^*$-algebra 
$\mathcal{E}_A:=C^*_1\braket{\NS{x}{m}|\mathcal{R}(\NS{x}{m})}$ for which there is a matrix representation 
$\mathcal{E}_A\twoheadrightarrow \mathbf{E}_A \subseteq M_n$ such 
that $A\subseteq \mathbf{E}_A$, will be called an {\bf environment algebra} for $A$.
\end{definition}

Let us consider the universal C$^*$-algebras $C(\mathbb{J})$, $C(\TT)$, $C(\TT)\ast_{\CC} C(\TT)$, $C_{\delta}(\TT[2])$, $C_{\delta}(\mathbb{J}\times \TT)$ 
and $C^*_{\varepsilon}\langle\ZZ/2\times \ZZ \rangle$, defined in terms of 
generators and relations by the expressions.

\begin{eqnarray*}
  C(\mathbb{J}):=
C^*_1\left\langle u \left| 
                     \begin{array}{l}
                       h^*=h, \|h\|\leq 1
                      \end{array}
\right.\right\rangle
\end{eqnarray*}

\begin{eqnarray*}
  C(\TT):=
C^*_1\left\langle u \left| 
                     \begin{array}{l}
                       uu^*=u^*u=1
                      \end{array}
\right.\right\rangle
\end{eqnarray*}

\begin{eqnarray*}
  C(\TT)\ast_{\CC}C(\TT):=
C^*_1\left\langle u,v \left| 
                     \begin{array}{l}
                       uu^*=u^*u=1,\\
                       vv^*=v^*v=1
                      \end{array}
\right.\right\rangle
\end{eqnarray*}

\begin{eqnarray*}
 C_{\delta}(\TT[2]):=
C^*_1\left\langle u,v \left| 
                     \begin{array}{l}
                       uu^*=u^*u=1,\\
                       vv^*=v^*v=1,\\
                       \|uv-vu\|\leq \delta
                      \end{array}
\right.\right\rangle
\end{eqnarray*}

\begin{eqnarray*}
 C_{\delta}(\mathbb{J}\times \TT):=
C^*_1\left\langle h,u \left| 
                     \begin{array}{l}
                       h^*=h, \|h\|\leq 1\\
                       uu^*=u^*u=1,\\
                       \|hu-uh\|\leq \delta
                      \end{array}
\right.\right\rangle
\end{eqnarray*}

\[
 C^*_{\varepsilon}\langle\ZZ/2\times \ZZ \rangle:=
 C^*_1\left\langle u,v \left| 
                     \begin{array}{l}
                       uu^*=u^*u=u^2=1,\\
                       vv^*=v^*v=1,\\
                       \|uv-vu\|\leq \varepsilon
                      \end{array}
\right.\right\rangle.
\]

Let us consider now a local matrix representation result that we will use later in the construction of particular representation 
schemes.

\begin{lemma}
 \label{C_star_F2_inf_surj}
For every integer $n\geq 1$, there are $s_2,u_n,v_n\in\U{\Minf}$ such that the diagram
\[
 \xymatrix{
C(\TT)^{\ast2} \ar@{->>}[d] \ar@{->>}[r] & C^*\braket{(\mathbb{Z}/n)^{\ast 2}}  \ar@{->>}[r] & C^*_n(u_n,v_n) \ar@{=}[d]\\
C^*\braket{\mathbb{Z}/n\ast \mathbb{Z}/2} \ar@{->>}[r] & C^*_n(s_2,v_n) \ar@{=}[r] & M_n
}
\]
commutes, where $s_2\in \mathbb{H}(n)$, $u_n$ and $v_n$ are unitary elements in $M_n$.
\end{lemma}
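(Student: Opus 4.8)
The plan is to construct the matrices $u_n, v_n, s_2$ explicitly and then read off the four surjections in the diagram. First I would take $v_n := \Omega_n = e^{\frac{2\pi i}{n}\num{n}}$, the diagonal unitary of $n$-th roots of unity, which visibly has order $n$ (i.e. $v_n^n = \I_n$), so it factors through $C^*\langle \ZZ/n\rangle = C(\ZZ/n)$. For $u_n$ I would take (a unitary conjugate of) the cyclic shift $\Sigma_n$; since $\Sigma_n^n = \I_n$ it too factors through $C^*\langle\ZZ/n\rangle$, and the pair $(\Omega_n,\Sigma_n)$ generates all of $M_n$ because they obey the Weyl commutation relation $\Omega_n\Sigma_n = e^{2\pi i/n}\Sigma_n\Omega_n$ and this irreducible pair generates $M_n$ by the standard argument (the algebra they generate is a $*$-subalgebra of $M_n$ that is irreducible, hence all of $M_n$ by Burnside/Schur). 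This gives $C^*_n(u_n,v_n) = M_n$ and the vertical equality on the right, and the composite $C(\TT)^{\ast 2} \twoheadrightarrow C^*\langle(\ZZ/n)^{\ast 2}\rangle \twoheadrightarrow C^*_n(u_n,v_n) = M_n$ along the top row.

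Next I would produce $s_2 \in \mathbb{H}(n)\cap\U{n}$, i.e. a self-adjoint unitary (symmetry), with $s_2^2 = \I_n$, together with the property that $C^*_n(s_2,v_n) = M_n$ as well. The natural choice is to build $s_2$ from the shift: a self-adjoint unitary that, together with the diagonal $\Omega_n$, still acts irreducibly. For instance, one can take $s_2$ to be a suitable reflection permutation matrix (the "flip" $k \mapsto n+1-k$, which is an involution), or more robustly conjugate $\Sigma_n$ so as to symmetrize it; one must check that $C^*(s_2,\Omega_n)$ is still irreducible, which again follows because $\Omega_n$ has simple spectrum (so $C^*(\Omega_n)$ is a maximal abelian subalgebra, the diagonal) and $s_2$ does not preserve any proper subset of coordinate axes compatible with that diagonal decomposition. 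Then $C^*_n(s_2,v_n) = M_n$, giving the bottom-right equality, and since $s_2$ is a symmetry and $v_n$ is a unitary, the pair defines a representation of $C^*\langle \ZZ/2 \ast \ZZ\rangle$ — in fact of $C^*\langle \ZZ/2 \ast \ZZ/n\rangle$ since $v_n^n = \I_n$ — yielding the bottom row $C^*\langle \ZZ/n \ast \ZZ/2\rangle \twoheadrightarrow C^*_n(s_2,v_n) = M_n$.

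Finally I would identify the left vertical map: $C(\TT)^{\ast 2} = C^*\langle u,v \mid \text{unitary}\rangle \twoheadrightarrow C^*\langle \ZZ/n \ast \ZZ/2\rangle$ is the quotient imposing $u^2 = 1$ and $v^n = 1$ (interpreting the first $\TT$-generator as the $\ZZ/2$-generator and the second as the $\ZZ/n$-generator), which is plainly a surjection of universal C*-algebras. Commutativity of the square then amounts to checking that the two ways around send the free generators $u,v$ of $C(\TT)^{\ast 2}$ to the same elements $s_2$ and $v_n$ of $M_n$ — going right-then-down sends $u\mapsto u_n$, but we need the bottom-left path to send $u \mapsto s_2$; so I would instead route the top generator through $u_n$ and arrange the naming so that the common value is consistent, i.e. choose $u_n$ and $s_2$ to be the \emph{same} unitary when that unitary happens to be a symmetry (e.g. when $n=2$), and in general note the diagram only asserts that \emph{some} choices make it commute — so I take $u_n$ to be whatever the clockwise composite produces and verify the counterclockwise composite lands on the matching generators. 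The main obstacle is the simultaneous-irreducibility requirement: one must choose $s_2$ (a \emph{self-adjoint} unitary, a strong constraint) so that $C^*(s_2, \Omega_n) = M_n$, and verify this uniformly in $n$; everything else is bookkeeping about universal properties of C*-algebras on generators and relations.
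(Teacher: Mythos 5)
Your overall strategy is the same as the paper's: exhibit explicit unitaries of orders $n$, $n$, and $2$ that pairwise generate $M_n$, then invoke the universal properties of $C(\TT)^{\ast 2}$, $C^*\braket{(\ZZ/n)^{\ast 2}}$ and $C^*\braket{\ZZ/n\ast\ZZ/2}$. The gap is in the one step you yourself flag as the main obstacle: your concrete choice of $s_2$ fails. Having set $v_n=\Omega_n$, you must pair a symmetry with the \emph{diagonal} clock matrix, and you propose the flip permutation $P\colon e_k\mapsto e_{n+1-k}$. But since $P^2=\I_n$ and $P$ normalizes the diagonal masa, every word in diagonals and $P$ reduces to $d$ or $dPd'$, so $C^*(P,\Omega_n)$ is spanned by the $e_{ii}$ together with the $e_{i,n+1-i}$ --- a proper subalgebra of $M_n$ for $n\geq 3$ (a direct sum of copies of $M_2$ and at most one copy of $\CC$). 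Equivalently, the coordinate subspace $\mathrm{span}\{e_1,e_n\}$ is invariant under both generators, contradicting your claim that the flip preserves no proper set of coordinate axes. Your fallback, ``conjugate $\Sigma_n$ so as to symmetrize it,'' is impossible for $n\geq 3$: a self-adjoint unitary has spectrum in $\{\pm 1\}$, while $\Sigma_n$ has all $n$-th roots of unity as eigenvalues and conjugation preserves spectrum.

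The repair is either to take a non-permutation symmetry adapted to the diagonal, e.g.\ the Householder reflection $s_2=\dft{n}(\I_n-2e_{11})\dft{n}^*$, or (as the paper does) to swap the roles of your generators: the paper puts the shift in the common slot, $v_n:=\Sigma_n$, $u_n:=\Omega_n$, and $s_2:=\I_n-2p$ with $p=\diag{1,0,\ldots,0}$. Then $p=(\I_n-s_2)/2$, the conjugates $\Sigma_n^k p\Sigma_n^{-k}$ yield all diagonal matrix units, and $e_{ii}\Sigma_n^k e_{jj}$ yields the off-diagonal ones, so $C^*(s_2,\Sigma_n)=M_n$ by a short computation with no irreducibility argument needed. (Your closing worry about the square commuting literally on generators is fair --- the paper is equally loose there and really only establishes the surjections --- but that is not where your argument breaks.)
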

\begin{proof}
Since we have that $C(\TT)^{\ast2}\simeq C^*\braket{\mathbb{F}_2}\simeq C^*(\ZZ^{\ast2})$, by universality of the 
C$^*$-representa-tions
\begin{eqnarray*}
  C^*(\ZZ^{\ast2})&\simeq&
C^*\left\langle u,v \left| 
                     \begin{array}{l}
                       uu^*=u^*u=\I,\\
                       vv^*=v^*v=\I
                      \end{array}
\right.\right\rangle \\
C^*((\ZZ/n)^{\ast2})&\simeq&
C^*\left\langle u,v \left| 
                     \begin{array}{l}
                       uu^*=u^*u=\I,\\
                       vv^*=v^*v=\I,\\
                       u^n=v^n=\I
                      \end{array}
\right.\right\rangle \\                      
 C^*(\ZZ/n\ast \ZZ/2)&\simeq&
C^*\left\langle u,v \left| 
                     \begin{array}{l}
                       uu^*=u^*u=\I,\\
                       vv^*=v^*v=\I,\\
                       u^n=v^2=\I
                      \end{array}
\right.\right\rangle,
\end{eqnarray*}
and by the structural properties of $M_n$, it is enough to find 
for any $n\in\ZZ^+$, up to unitary congruence in $M_n$, three unitaries $s_2,u_n,v_n\in\U{n}$ such that 
$C^*(s_2,v_n)= M_n= C^*(u_n,v_n)$ and $u_n^n=v_n^n=s_2^2=\I_n$, this 
can be done by taking for any $n\in\mathbb{Z}^+$ the orthogonal projection 
$p:=\diag{1,0,\ldots,0}\in\mathbb{H}(n)$ and the matrix $s_2=\I-2p\in \mathbb{H}(n)$, setting $u_n:=\Omega_n$
and $v_n:=\Sigma_n$ for $n\geq 2$ and $u_1=v_1=1$ for $n=1$, by functional calculus and direct computations it is easy to verify that $s_2,u_n,v_n\in\U{n}$ 
for every $n\in\mathbb{Z}^+$, and that $s_2=s_2^*$, it is also easy to verify 
that the system of matrix units 
$\set{e_{i,j,n}}_{1\leq i,j\leq n}$ and $u_n$ can be expressed as words in $C^*(s_2,v_n)$ for every $n\in\mathbb{Z}^+$, it is also 
clear that $p=e_{1,1,n}$ and hence, $s_2$ can be written as linear combinations of words in $C^*(u_n,v_n)$, we will then have that 
$C^*\braket{\ZZ/n\ast \ZZ/2}\twoheadrightarrow C^*(v_n,s_2)$ and 
$C^*\braket{\ZZ/n^{\ast2}}\twoheadrightarrow C^*(u_n,v_n)$ by the universal 
properties of $C^*\braket{\ZZ/2\ast\ZZ/n}$ and $C^*\braket{\ZZ/n^{\ast2}}$ respectively, since it can be easily verified that 
\begin{eqnarray*}
 u_n^n=v_n^n=s_2^2=\I_n,
\end{eqnarray*}
from these facts and the universal property of $C(\TT)^{\ast2}\simeq C^*\braket{\mathbb{F}_2}\simeq C^*\braket{\mathbb{Z}^{\ast2}}$, the result follows.
\end{proof}

\begin{remark}
\label{pre_localization}
 It can be seen that for any matrix C$^*$-subalgebra $A\subseteq M_n$, there is $\delta>0$ such that both $C(\TT)\ast_{\CC}C(\TT)$ and $C_{\delta}(\TT[2])$ are environment algebras of $A$. It can 
 also be seen that for any abelian C$^*$-subalgebra $D\subseteq M_n$, $C(\TT)$ is an environment algebra of $D$.
\end{remark}

\section{Local Matrix Connectivity}

\label{main_results}

\subsection{Topologically controlled linear algebra and Soft Tori}

\begin{definition}[Controlled sets of matrix functions]
\label{nc_controlled_topology}
 Given $\delta>0$, a function $\varepsilon:\RR\to\RR^+_0$, a finite set of functions $F \subseteq C(\TT,\disk)$ and two unitary matrices $u,v\in M_n$ such that 
 $\|uv-vu\|\leq \delta$, we say that the set 
$F$ is $\delta$-controlled by $\mathrm{Ad}[v]$ if the diagram,
\[
 \xymatrix{
C^*(u,v) & C^*(u) \ar[l] \ar[d]_{\mathrm{Ad}[v]}  & \{u\} \ar[d]_{\mathrm{Ad}[v]} \ar[l]^{\imath} \ar[rd]^{f}_{\approx_{\varepsilon(\delta)}} &  \\
& C^*(vuv^*) \ar[lu]  & \{vuv^*\} \ar[l]^{\imath} \ar[r]_{f} & \N{n}(\disk)
}
\]
commutes up to an error $\varepsilon(\delta)$ for each $f\in F$.
\end{definition}

\begin{remark}
 The $C^*$-homomorphism $C_{\delta}(\TT[2])\to C^*(u,v)$ allows us to see that the Soft Torus $C_{\delta}(\TT[2])$ provides an environment algebra for any $\delta$-controlled set of matrix functions.
\end{remark}

\begin{lemma}[Existence of isospectral approximants]
\label{Joint_spectral_variation_inequality_2}
 Given $\varepsilon>0$ there is $\delta> 0$ such that, for any $2$ families of $N$ pairwise commuting normal 
 matrices $\NS{x}{N}$ and $\NS{y}{N}$ 
 which satisfy the constraints $\|x_j-y_j\|\leq \delta$ for each $1\leq j\leq N$, there is a $C^*$-homomorphism  
 $\Psi$ such that $\sigma(\Psi(x_j))=\sigma(x_j)$, $[\Psi(x_j),y_j]=0$ and 
 $\max\{\|\Psi(x_j)-y_j\|,\|\Psi(x_j)-x_j\|\}\leq \varepsilon$, for each $1\leq j\leq N$.
\end{lemma}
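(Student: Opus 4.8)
The plan is to reduce the $N$-tuple case to a single pair of commuting normal matrices and then to invoke Theorem \ref{Pryde_Joint_spectral_variation} on joint spectral variation. First I would pass to the commuting partition: write each $x_j = a_j + i b_j$ and $y_j = c_j + i d_j$ with $a_j, b_j$ hermitian and mutually commuting (and likewise $c_j, d_j$), so that the $2N$-tuple $(a_1,\ldots,a_N,b_1,\ldots,b_N)$ is a commuting family of hermitian matrices, as is $(c_1,\ldots,c_N,d_1,\ldots,d_N)$. Since the decomposition into real and imaginary parts is norm-contractive, $\|x_j - y_j\| \le \delta$ forces $\|a_j - c_j\|, \|b_j - d_j\| \le \delta$, hence by \eqref{Clifford_bound} the Clifford operator of the difference $2N$-tuple has norm at most $2N\delta$. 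Applying Theorem \ref{Pryde_Joint_spectral_variation} to these two commuting $2N$-tuples of normal (indeed hermitian) matrices produces a permutation $\tau$ of $\{1,\ldots,n\}$ with $\|\Lambda^{(k)}(x) - \Lambda^{(\tau(k))}(y)\| \le e_{2N,0}\cdot 2N\delta$ for every $k$, where I am writing $\Lambda^{(k)}$ for the $k$-th joint eigenvalue of the whole family.

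Next I would build $\Psi$ explicitly as conjugation by a unitary intertwining the joint eigenbases, composed with a permutation. Concretely, let $P \in \U{n}$ diagonalize the commuting family $\{y_j\}$ simultaneously and let $Q \in \U{n}$ diagonalize $\{x_j\}$ simultaneously; let $\Pi_\tau$ be the permutation matrix realizing the matching $\tau$ from the previous paragraph. Define $W := P\,\Pi_\tau\, Q^* \in \U{n}$ and set $\Psi := \mathrm{Ad}[W]$, i.e. $\Psi(z) = W z W^*$. Then $\Psi$ is a $*$-homomorphism (in fact a $*$-automorphism of $M_n$), so $\sigma(\Psi(x_j)) = \sigma(x_j)$ automatically. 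By construction $W x_j W^* = P\,\Pi_\tau\, (Q^* x_j Q)\,\Pi_\tau^*\, P^*$ is, for each $j$, a diagonal matrix in the eigenbasis $P$ of the $y$-family, hence lies in the abelian algebra $C^*(y_1,\ldots,y_N)$; in particular $[\Psi(x_j), y_k] = 0$ for all $j,k$.

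It remains to bound $\|\Psi(x_j) - y_j\|$ and $\|\Psi(x_j) - x_j\|$. For the first: both $\Psi(x_j)$ and $y_j$ are diagonal in the basis $P$, and their diagonal entries are exactly $\Lambda^{(\tau(k))}(x_j)$ versus $\Lambda^{(k)}(y_j)$ after the matching — wait, more carefully, the diagonal of $\Psi(x_j)$ at slot $k$ is the $x$-eigenvalue paired by $\tau$ with the $k$-th $y$-eigenvalue — so $\|\Psi(x_j) - y_j\| = \max_k |\Lambda^{(\tau(k))}(x_j) - \Lambda^{(k)}(y_j)| \le \max_k \|\Lambda^{(\tau(k))}(x) - \Lambda^{(k)}(y)\| \le 2N e_{2N,0}\,\delta$. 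For the second, $\|\Psi(x_j) - x_j\| \le \|\Psi(x_j) - y_j\| + \|y_j - x_j\| \le 2N e_{2N,0}\,\delta + \delta$. Choosing $\delta := \varepsilon / (2N e_{2N,0} + 1)$ makes both quantities at most $\varepsilon$, completing the proof.

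The main obstacle I anticipate is bookkeeping the indexing consistently: Theorem \ref{Pryde_Joint_spectral_variation} matches \emph{joint} eigenvalue tuples of the whole family by a single permutation $\tau$, and one must be careful that conjugating by $W = P\Pi_\tau Q^*$ simultaneously realizes this matching for \emph{every} coordinate $j$ at once and lands inside the abelian algebra generated by the $y_j$'s — this is exactly what forces one to diagonalize the families jointly rather than entry-by-entry, and it is why the partition into commuting hermitian tuples (so that Theorem \ref{Pryde_Joint_spectral_variation} applies with a genuine commuting $2N$-tuple) is needed up front. A minor point is tracking the constant: using $e_{2N,0}$ rather than $e_{N,0}$ because of the doubling from the partition, together with the $2N$ factor from \eqref{Clifford_bound}.
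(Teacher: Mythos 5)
Your proposal is correct and follows essentially the same route as the paper: invoke Theorem \ref{Pryde_Joint_spectral_variation} to get a permutation matching of joint spectra, then take $\Psi$ to be conjugation by a product of joint diagonalizers and the permutation matrix, and close with the same triangle-inequality estimates. The only deviation is your preliminary passage to the commuting hermitian partition, which is unnecessary here since Theorem \ref{Pryde_Joint_spectral_variation} as stated already applies to commuting normal $N$-tuples — the paper uses it directly and so obtains the constant $e_{N,0}N$ in place of your $e_{2N,0}\cdot 2N$.
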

\begin{proof}
  By changing basis if necessary, we can assume that $\NS{y}{N}$ are diagonal matrices. From T.\ref{Pryde_Joint_spectral_variation} we will have that there is a permutation $\tau$ of 
 the index set $\{1,\ldots,n\}$ such that for each $1\leq k\leq n$ we have that
 \begin{eqnarray}
  |\Lambda^{(k)}(x_j)-\Lambda^{(\tau(k))}(y_j)|&\leq& 
  \|\Lambda^{(k)}(\NS{x}{N})-\Lambda^{(\tau(k))}(\NS{y}{N})\| \nonumber \\
  &\leq& e_{N,0}\|\Cliff{x_1-y_1,\ldots,x_N-y_N}\|.
\label{Joint_spectral_variation_inequality}
  \end{eqnarray}
 Using \ref{Clifford_bound} and as a consequence of \ref{Joint_spectral_variation_inequality} we can find a permutation matrix $\mathcal{T}\in \U{n}$ such that
 \begin{eqnarray}
  \|\mathcal{T}^*\diag{\Lambda(x_j)}\mathcal{T}-\diag{\Lambda(y_j)}\|&\leq& e_{N,0}\|\Cliff{x_1-y_1,\ldots,x_N-y_N}\|\nonumber \\
                                                                     &\leq& e_{N,0}N\delta, \:\: 1\leq j\leq N.
  \label{Existence_of_nu_reaching_Joint_diagonalizer}
 \end{eqnarray}
 Let us set $c_N:=e_{N,0}N$. For the matrices $\NS{x}{N}$ there is a unitary joint diagonalizer $W\in M_n$ such that 
 $W\diag{\Lambda(x_j)}W^*=x_j$, $1\leq j\leq N$,
 \begin{eqnarray}
   \|W\diag{\Lambda(x_j)}W^*-\mathcal{T}^*\diag{\Lambda(x_j)}\mathcal{T}\|
   &\leq&\|W\diag{\Lambda(x_j)}W^*-y_j\|\nonumber \\ 
   &&+\|y_j-\mathcal{T}^*\diag{\Lambda(x_j)}\mathcal{T}\| \nonumber\\
   &\leq&(1+c_N)\|x_j-y_j\|\leq (1+c_N)\delta.
   \label{controlled_JMP_inequalities}
 \end{eqnarray}
 If we set $V:=W\mathcal{T}$ and $\varepsilon=(1+c_N)\delta$, we will have that by \ref{Existence_of_nu_reaching_Joint_diagonalizer} and 
 \ref{controlled_JMP_inequalities} the inner $C^*$-automorphism $\Psi:=\mathrm{Ad}[V^*]$  satisfies the constraints in the statement of 
 this lemma, and we are done.
\end{proof}

\begin{remark}
 \label{isospectral_approximants_remark}
 The $C^*$-automorphism $\Psi$ from L.\ref{Joint_spectral_variation_inequality_2} is called an isospectral approximant for the two $N$-tuples 
$\NS{x}{N}$ and $\NS{y}{N}$. If $\Psi:=\mathrm{Ad}[W^*]$ for some $W\in \U{n}$, then we will have that its inverse $\Psi^{\dagger}$ will be given 
by the expression $\Psi^{\dagger}=\mathrm{Ad}[W]$.
\end{remark}

\begin{remark}
 The constant $c_N$ in the proof of L.\ref{Joint_spectral_variation_inequality_2} depends only on the number $N$ of matrices in each family. It does not depend on 
 the matrix size.
\end{remark}

\subsection{Local piecewise analytic connectivity}

\label{main_results_piecewise_analytic}

In this section we will present some piecewise analytic local connectivity results in matrix representations 
of the form $C_\varepsilon(\TT[2])\to M_n \leftarrow C(\TT[N])$ and 
$C_\varepsilon(\mathbb{J}\times \TT)\to M_n \leftarrow C(\mathbb{J}^N)$. 

\begin{theorem}[Local normal toral connectivity]
\label{local_connection_of_N_tuples_of_normal_contractions}
 Given $\varepsilon>0$ and any $n\in\ZZ^+$, there is $\delta>0$ such that, for any $2N$ normal contractions $\NS{x}{N}$ and $y_1,\ldots,$ $y_N$ in $M_n$ which 
satisfy the relations
\[
\left\{
\begin{array}{l}
 [x_j,x_k]=[y_j,y_k]=0, \:\:\: 1\leq j,k\leq N,\\
 \|x_j-y_j\|\leq \delta, \:\:\: 1\leq j\leq N,
\end{array}
\right.
\]
there exist $N$ toral matrix 
links $X^1,\ldots,X^N$ in $M_n$, which solve the problems
\[
 x_j \rightsquigarrow y_j, \:\:\: 1\leq j\leq N,
\]
and satisfy the constraints
\[
\left\{
\begin{array}{l}
 \|X_t^j(x_j)-y_j\|\leq \varepsilon,
\end{array}
\right.
\]
for each $1\leq j,k\leq N$ and each $t\in \mathbb{I}$. Moreover, $\ell_{\|\cdot\|}(X_t^j(x_j))\leq \varepsilon, \:\:\: 1\leq j\leq N$.
\end{theorem}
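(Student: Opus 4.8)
The plan is to reduce the simultaneous connectivity of the $N$-tuple to a single application of the unitary-path machinery of Lemma~\ref{Lin_Toroidal_Links}, after first normalizing the two tuples so that the target tuple is diagonal and the source tuple is close to it in a controlled way. First I would invoke L.\ref{Joint_spectral_variation_inequality_2}: given $\varepsilon>0$ and $n$, choose $\delta_1>0$ so that the isospectral approximant $\Psi=\mathrm{Ad}[V^*]$ exists, giving normal contractions $\Psi(x_j)$ with $\sigma(\Psi(x_j))=\sigma(x_j)$, $[\Psi(x_j),y_j]=0$, and $\max\{\|\Psi(x_j)-y_j\|,\|\Psi(x_j)-x_j\|\}$ as small as we like. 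Because all the $\Psi(x_j)$ commute with all the $y_j$ and both families are commuting, after a further change of basis we may assume every $y_j$ is diagonal and every $\Psi(x_j)$ commutes with each $y_j$; the unitary $V$ relating $x_j$ to $\Psi(x_j)$ is built from the joint diagonalizer $W$ and a permutation matrix $\mathcal{T}$, and $V$ almost commutes with each $y_j$ (indeed with each $\Psi(x_j)$) because $\|x_j-\Psi(x_j)\|$ is small. The idea is then: connect $x_j$ to $\Psi(x_j)$ along the conjugation path $t\mapsto V_t x_j V_t^*$ where $\{V_t\}$ is a short unitary path from $\I_n$ to $V$ that almost commutes with the whole finite set $\mathscr{F}=\{x_1,\dots,x_N,y_1,\dots,y_N\}$, and then connect $\Psi(x_j)$ to $y_j$ by a straight-line (convexity) path inside the commutant of the $y_j$'s, which stays normal since $\Psi(x_j)$ and $y_j$ commute and are both contractions.

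The first leg uses L.\ref{Lin_Toroidal_Links} applied to the unitary $V$ and the set $\mathscr{F}$: with $M=1$ and the prescribed $\varepsilon$, there is $\delta_2>0$ so that if $\|Va-aV\|<\delta_2$ for all $a\in\mathscr{F}$ then there is a path of unitaries $\{V_t\}_{t\in\mathbb{I}}$ with $V_0=V$, $V_1=\I_n$, $\|V_t a-aV_t\|<\varepsilon'$ for all $a\in\mathscr{F}$, and $\mathrm{Length}(\{V_t\})\le 2\pi$. One has to check that choosing $\delta$ small enough forces $\|V a - a V\|<\delta_2$: since $V=W\mathcal{T}$ and $V^*x_jV=\Psi(x_j)$ with $\|\Psi(x_j)-y_j\|$ and $\|\Psi(x_j)-x_j\|$ controlled by $\delta$, the near-commutation of $V$ with the $y_j$ (hence with the $x_j$) follows from estimates of the same type as \ref{controlled_JMP_inequalities}. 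Then $X^j_t:=V_t^* x_j V_t$ for $t\in[0,\tfrac12]$ (reparametrized) is a normal, piecewise-analytic path from $x_j$ to $\Psi(x_j)$ with $\|X^j_t(x_j)-y_j\|\le \|V_t^*x_jV_t-V_t^*y_jV_t\|+\|V_t^*y_jV_t-y_j\|\le \|x_j-y_j\|+\varepsilon'$, and its length is bounded by $2\cdot 2\pi\cdot\|x_j\|\le 4\pi$ — which is too big, so in fact I would instead run L.\ref{Lin_Toroidal_Links} at a length scale small enough: rescale by first shrinking to the relevant neighborhood, or more simply use the quantitative version with $\mathrm{Length}\le 2\pi$ replaced by an arbitrarily small bound by taking $\delta$ small, which is how the analogous length bounds are obtained in \cite{Aproximate_diagonalization_Lin}. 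The second leg $X^j_t:=(2-2t)\Psi(x_j)+(2t-1)y_j$ for $t\in[\tfrac12,1]$ is affine, stays in $\N{n}(\disk)$ by joint commutativity and contractivity, has length $\|\Psi(x_j)-y_j\|\le\varepsilon'$, and is (real-)analytic in $t$; concatenating via $\circledast$ gives the toral matrix link, the torality (the commutation $[X^j_t(x_j),X^k_t(x_k)]=0$ for commuting pairs) being automatic because at every $t$ both legs are built by a common conjugation $\mathrm{Ad}[V_t^*]$ of commuting matrices, respectively affine combinations inside a common abelian algebra.

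Assembling the estimates: choose $\varepsilon'=\varepsilon/3$ say, let $\delta_2=\delta_2(\varepsilon',n,\mathscr{F}\text{-bound }1)$ be Lin's constant, let $\delta_1$ be the L.\ref{Joint_spectral_variation_inequality_2}-constant ensuring the isospectral approximant satisfies $\|\Psi(x_j)-x_j\|,\|\Psi(x_j)-y_j\|<\min\{\varepsilon',\delta_2/(10(1+c_N))\}$, and set $\delta=\min\{\delta_1,\delta_2/(10(1+c_N))\}$; then the hypotheses of both lemmas are met, the sup-norm bound $\|X^j_t(x_j)-y_j\|\le\varepsilon$ holds on each leg, and the total length is $\le\varepsilon$ after choosing $\delta$ small enough to make Lin's path short (its length bound can be pushed below $\varepsilon$ by the same mechanism that produces the $2\pi$ bound — one applies the lemma to $u$ whose spectrum is already $\varepsilon$-close to a single point, which is our situation since $V$ is $\delta$-close to a scalar after the controlled constructions, or invokes the refined statement). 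The main obstacle I expect is precisely this last point: getting the \emph{length} bound $\ell_{\|\cdot\|}(X^j_t(x_j))\le\varepsilon$ rather than a fixed $O(1)$ bound, because L.\ref{Lin_Toroidal_Links} as quoted only gives $\mathrm{Length}\le 2\pi$. Resolving it requires either a quantitative refinement of Lin's path whose length shrinks with the defect $\|[V,a]\|$, or a preliminary reduction (conjugating by a near-identity unitary and exploiting that the joint-spectral displacement is $O(\delta)$) so that the unitary to be contracted is itself within $\varepsilon$ of a scalar, whence it can be contracted along a path of length $O(\varepsilon)$; the fixed size $n$ in the hypothesis is exactly what makes such a reduction available, mirroring how dimension-dependence enters in \cite{Aproximate_diagonalization_Lin}.
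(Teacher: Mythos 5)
Your decomposition --- an isospectral approximant from L.\ref{Joint_spectral_variation_inequality_2}, then a curved conjugation leg followed by a flat affine leg, concatenated via $\circledast$ --- is exactly the paper's argument, and your sup-norm estimates and your choice of constants match what the paper does. The one step you flag as an open obstacle, namely the length bound $\ell_{\|\cdot\|}(X^j_t)\leq\varepsilon$ for the curved leg, is resolved in the paper not by L.\ref{Lin_Toroidal_Links} (which, as you correctly observe, only yields a conjugated path of length $O(1)$ and controls the displacement $\|V_t^*x_jV_t-x_j\|$ rather than the speed) but by L.\ref{Lin_Toroidal_Links_0}: since $W\in\U{n}$ has at most $n$ eigenvalues, $\TT\backslash\sigma(W)$ contains an arc of length $d\geq 2\pi/n$, so that lemma produces a self-adjoint $H_W$ with $W=e^{iH_W}$ and $\|[H_W,x_j]\|\leq\varepsilon/2$ whenever $\|[W,x_j]\|=\|x_j-\Psi(x_j)\|$ is small enough. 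The path $\mathcal{W}_t=e^{-itH_W}$ then conjugates with speed $\|\partial_t\Ad{\mathcal{W}_t}{x_j}\|=\|[H_W,x_j]\|$, so the curved leg has length $\|[H_W,x_j]\|\leq\varepsilon/2$ on the nose; this is precisely the ``quantitative refinement whose length shrinks with the defect'' that you ask for, and it is already quoted in the paper. It is also the reason the theorem's $\delta$ is allowed to depend on $n$ (in contrast with T.\ref{C_0_unitary_interpolants}): the arc length $2\pi/n$ enters the $\delta$ of L.\ref{Lin_Toroidal_Links_0}. With that substitution for your first leg, your proof closes and coincides with the paper's.
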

\begin{proof}

By L.\ref{Lin_Toroidal_Links_0}, L.\ref{Lin_Toroidal_Links} and L.\ref{Joint_spectral_variation_inequality_2} we will have that given $\varepsilon>0$, 
 there are $0<\delta\leq \nu\leq \varepsilon/2$ and an isospectral approximant $\Psi:=\mathrm{Ad}[W^*]$ (with $W\in\U{n}$) for $\NS{x}{N}$ and $\NS{y}{N}$ such that, 
 $\max\{\|x_j-\Psi(x_j)\|,\|y_j-\Psi(x_j)\|\}\leq \nu$ and $[\Psi(x_j),y_j]=0$ for each $1\leq j\leq N$, we will also have that there is a unitary path $\mathcal{W}\in C(\mathbb{I},M_n)$ which is defined by the 
 expression $\mathcal{W}_t:=e^{-itH_W}$ for each $t\in\mathbb{I}$, where $H_W\in M_n$ is a hermitian matrix such that $e^{i H_W}=W$ and 
 $\|[H_W,x_j]\|\leq \varepsilon/2$ for each $1\leq j\leq N$, and that is defined by $H_W:=h(W)$, for some function 
$h:\Omega^{\alpha}_{d,s}\rightarrow [-1,1]$, and where $\sigma(W)\subset \Omega^{\alpha}_{d,s}:=\{e^{i(\pi t+\alpha)}|-1+s<t<1-s\}\subset \TT$, with $s,\alpha\in\RR$ chosen in such a 
way that $\TT\backslash \Omega^{\alpha}_{d,s}$ contains an arc of length $d$ (with $d\geq 2\pi/n$). Moreover, we can choose $\delta$ and $\nu$ in such a way that 
the path $\mathcal{W}$ satisfies the inequalities $\|[\mathcal{W}_t,\Psi(x_j)]\|\leq \varepsilon/2$ for each $t\in [0,1]$ and each $1\leq j\leq N$. 

It can be seen that the paths $\breve{X}^j_t:= \Ad{\mathcal{W}_t}{x_j}$ will solve the problem $x_j \rightsquigarrow_{\varepsilon/2} \Psi(x_j)$ for each $1\leq j\leq N$. Let 
us set $\bar{X}_t^j:=(1-t)\Psi(x_j)+ty_j$, we can now construct $N$ 
toroidal matrix links of the form $X^j:=\breve{X}^j\circledast \bar{X}^j$ which solve the problems 
$x_j\rightsquigarrow y_j$, locally preserve normality and commutativity and satisfy the $\|\cdot\|$-distance constraints
\begin{eqnarray*}
 \|X^j_t-y_j\|&\leq&\|X^j_t-\Psi({x}_j)\|+\|y_j-\Psi({x}_j)\|\\ 
               &\leq& \frac{\varepsilon}{2}+\nu\\
          &\leq& \frac{\varepsilon}{2} + \frac{\varepsilon}{2} = \varepsilon, 
\end{eqnarray*}
together with the $\|\cdot\|$-length constraints
 \begin{eqnarray}
  \ell_{\|\cdot\|}(X^j_t)&\leq&\ell_{\|\cdot\|}(\breve{X}^j_t)+\|\Psi({x}_j)-y_j\|\\
                         &=&\int_{\mathbb{I}}\|\partial_t \mathrm{Ad}[\mathcal{W}_t](x_j)\|dt+\|\Psi({x}_j)-y_j\|\\
                         &=&\|[H_W,\Psi({x}_j)]\|+\|\Psi({x}_j)-y_j\|\\
                         &\leq&\frac{\varepsilon}{2}+\nu\leq \varepsilon,
 \end{eqnarray}
which hold whenever $\|x_j-y_j\|\leq \delta$, $1\leq j\leq N$, and we are done.
\end{proof}

\begin{remark}
 It can be noticed that the solvent matrix links $X^1,\ldots$ $,X^N $ whose existence is stated 
 in T.\ref{local_connection_of_N_tuples_of_normal_contractions} are factored in the form $X^j=\breve{X}^j\circledast \bar{X}^j$, 
 we call $\breve{X}^j$ and $\bar{X}^j$ the {\em \bf curved} and {\em \bf flat} factors of $X^j$ respectively.
\end{remark}

We will derive now, some corollaries of the proof of T.\ref{local_connection_of_N_tuples_of_normal_contractions}.

\begin{corollary}[Local hermitian toral connectivity]
\label{local_hermitian_interpolants}
  Given $\varepsilon>0$ and any integer 
  $n\geq 1$, there is $\delta>0$ such that, for any $2N$ hermitian contractions $\NS{x}{N}$ and $\NS{y}{N}$ in $M_n$ which 
satisfy the relations
\[
\left\{
\begin{array}{l}
 [x_j,x_k]=[y_j,y_k]=0, \:\:\: 1\leq j,k\leq N,\\
 \|x_j-y_j\|\leq \delta, \:\:\: 1\leq j\leq N,
\end{array}
\right.
\]
there exist $N$ toral matrix 
links $X^1,\ldots,X^N$ in $M_n$, which solve the problems
\[
 x_j \rightsquigarrow y_j, \:\:\: 1\leq j\leq N,
\]
and satisfy the constraints
\[
\left\{
\begin{array}{l}
 X_t^j(x_j)=(X_t^j(x_j))^*,\\
 \|X_t^j(x_j)-y_j\|\leq \varepsilon,
\end{array}
\right.
\]
for each $1\leq j,k\leq N$ and each $t\in \mathbb{I}$. Moreover, $\ell_{\|\cdot\|}(X_t^j(x_j))\leq \varepsilon, \:\:\: 1\leq j\leq N$.
\end{corollary}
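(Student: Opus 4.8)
The plan is to derive Corollary \ref{local_hermitian_interpolants} directly from the proof of Theorem \ref{local_connection_of_N_tuples_of_normal_contractions}, exploiting the fact that a hermitian contraction is a normal contraction whose spectrum lies in the interval $\mathbb{J}=[-1,1]\subset\disk$. First I would apply Theorem \ref{local_connection_of_N_tuples_of_normal_contractions} to the given commuting families $\NS{x}{N}$ and $\NS{y}{N}$, viewing them as $N$-tuples of pairwise commuting normal contractions; this produces the desired $\delta>0$ and the toral matrix links $X^1,\ldots,X^N$ solving $x_j\rightsquigarrow y_j$ with the norm and length estimates $\|X_t^j(x_j)-y_j\|\leq\varepsilon$ and $\ell_{\|\cdot\|}(X_t^j(x_j))\leq\varepsilon$. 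The only extra thing to check is that these links can be taken to consist of \emph{hermitian} matrices, i.e. that $X_t^j(x_j)=(X_t^j(x_j))^*$ for every $t\in\mathbb{I}$.

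For this, recall the explicit factorization $X^j=\breve{X}^j\circledast\bar{X}^j$ from that proof. The flat factor $\bar{X}_t^j=(1-t)\Psi(x_j)+ty_j$ is a convex combination of $y_j$ and $\Psi(x_j)=\Ad{W^*}{x_j}=W^*x_jW$; since $x_j$ and $y_j$ are hermitian, both $\Psi(x_j)$ and $\bar{X}_t^j$ are hermitian for all $t$. The curved factor $\breve{X}_t^j=\Ad{\mathcal{W}_t}{x_j}=\mathcal{W}_t x_j\mathcal{W}_t^*$ with $\mathcal{W}_t=e^{-itH_W}$ unitary is a unitary conjugate of the hermitian matrix $x_j$, hence also hermitian for each $t$. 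Concatenation of two hermitian-valued paths is again hermitian-valued, so $X_t^j(x_j)$ is hermitian throughout, which is the additional constraint in the statement. The norm bound $\|X_t^j(x_j)-y_j\|\leq\varepsilon$ already guarantees $\sigma(X_t^j(x_j))\subset[-1-\varepsilon,1+\varepsilon]$, but more directly each $X_t^j(x_j)$ is a contraction (being a unitary conjugate of a contraction on the curved part, and a convex combination of two contractions on the flat part), so it is in fact a hermitian contraction.

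I do not expect a genuine obstacle here: the content is entirely inherited from Theorem \ref{local_connection_of_N_tuples_of_normal_contractions}, and the only point requiring (minor) care is verifying that \emph{both} pieces of the concatenated link $\breve{X}^j\circledast\bar{X}^j$ preserve the hermitian property $\ast$-invariance simultaneously with the contractivity and the commutativity relations $[x_j,x_k]=0$ used to build the isospectral approximant $\Psi$. Since $\Psi=\Ad{W^*}$ is a single inner automorphism applied uniformly to all of $\NS{x}{N}$, it preserves commutativity of the tuple, and conjugation and convex combination both respect $\ast$, so everything goes through verbatim. Hence the corollary follows by restricting the conclusion of the theorem to the hermitian case and recording that the links produced there are hermitian-valued.

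\begin{proof}
Apply Theorem \ref{local_connection_of_N_tuples_of_normal_contractions} to the families $\NS{x}{N}$ and $\NS{y}{N}$, regarding each $x_j,y_j$ as a normal contraction whose spectrum lies in $\mathbb{J}\subset\disk$. This yields $\delta>0$ and toral matrix links $X^1,\ldots,X^N$ in $M_n$ solving $x_j\rightsquigarrow y_j$ with $\|X_t^j(x_j)-y_j\|\leq\varepsilon$ and $\ell_{\|\cdot\|}(X_t^j(x_j))\leq\varepsilon$ for all $1\leq j\leq N$ and $t\in\mathbb{I}$. It remains to observe that these links are hermitian-valued. From the proof of Theorem \ref{local_connection_of_N_tuples_of_normal_contractions} each link factors as $X^j=\breve{X}^j\circledast\bar{X}^j$, where $\breve{X}_t^j=\Ad{\mathcal{W}_t}{x_j}=\mathcal{W}_tx_j\mathcal{W}_t^*$ with $\mathcal{W}_t=e^{-itH_W}$ unitary, and $\bar{X}_t^j=(1-t)\Psi(x_j)+ty_j$ with $\Psi=\mathrm{Ad}[W^*]$. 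Since $x_j=x_j^*$, the matrix $\breve{X}_t^j$ is a unitary conjugate of a hermitian matrix, hence hermitian for each $t$; similarly $\Psi(x_j)=W^*x_jW$ is hermitian, so $\bar{X}_t^j$, being a convex combination of the hermitian matrices $\Psi(x_j)$ and $y_j$, is hermitian for each $t$. As the concatenation of two hermitian-valued paths is hermitian-valued, we conclude $X_t^j(x_j)=(X_t^j(x_j))^*$ for all $t\in\mathbb{I}$, which is the additional constraint claimed. The remaining properties, namely that the $X^j$ are toral matrix links locally preserving normality and commutativity, and the displayed norm and length estimates, are exactly those provided by Theorem \ref{local_connection_of_N_tuples_of_normal_contractions}, and we are done.
\end{proof}
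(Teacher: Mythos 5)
Your proof is correct and follows essentially the same route as the paper: the paper's own argument likewise reduces to the observations that convex combinations $x+\alpha(y-x)$ of hermitian matrices and (partial) unitary conjugates $zxz^*$ remain hermitian, so the curved and flat factors of the links from Theorem \ref{local_connection_of_N_tuples_of_normal_contractions} are hermitian-valued. Your write-up simply makes these checks more explicit than the paper does.
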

\begin{proof}
 Since for any $\alpha\in\RR$, any pair of hermitian matrices $x,y\in \mathbb{H}(n)$ and any partial unitary $z\in \PU{n}$, we have that $x+\alpha(y-x)$ and 
 $zxz^*$ are also in $\mathbb{H}(n)$, the result follows as a consequence of L.\ref{Joint_spectral_variation_inequality_2} and 
 T.\ref{local_connection_of_N_tuples_of_normal_contractions}.
\end{proof}

\begin{corollary}[Local unitary toral connectivity]
\label{local_connectivity_N_tuples_commuting_unitaries}
 Given any $\varepsilon\geq 0$ and any integer $n\geq 1$, there is $\delta\geq 0$ such that given any $2N$  
 unitary matrices $U_{1},\ldots,U_{N}$ ,$V_{1},\ldots,V_{N}$ in $M_{n}$ which satisfy the relations 
  \begin{eqnarray*}
  \left\{
  \begin{array}{l}
   [U_{j},U_{k}]=[V_{j},V_{k}]=0,\\
   \|U_{k}-V_{k}\|\leq \delta,
  \end{array}
 \right.
 \end{eqnarray*}
for each $1\leq j,k \leq N$, there are toral matrix links $u^1,\ldots,u^N$ in $M_{n}$ which solve the interpolation problems 
 \[
 U_{k}\rightsquigarrow V_{k}, \:\:\: 1\leq k\leq N,
 \]
 and also satisfy the relations 
 \begin{eqnarray*}
  \left\{
  \begin{array}{l}
   (u^j_{t})^*u^j_{t}=u^j_{t}(u^j_{t})^*=\I_n,\\
   \|u^j_t-V_j\|\leq \varepsilon,
  \end{array}
 \right.
 \end{eqnarray*}
for each $t\in\mathbb{I}$ and each $1\leq j,k\leq N$. Moreover, $\ell_{\|\cdot\|}(u^j_{t})\leq \varepsilon$, $1\leq j\leq N$.
\end{corollary}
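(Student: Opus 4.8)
The plan is to re-run the proof of T.\ref{local_connection_of_N_tuples_of_normal_contractions} almost verbatim, keeping its \emph{curved} factor unchanged and replacing only its \emph{flat} factor — the line segment $(1-t)\Psi(x_j)+ty_j$, which does not stay inside $\U{n}$ — by a geodesic-type path of unitaries. First I would conjugate everything by a fixed unitary so that $V_1,\dots,V_N$ become diagonal; a unitary conjugation changes neither unitarity, nor operator norms, nor lengths, nor commutation relations, so it suffices to build the links in this basis and conjugate back at the end. Applying L.\ref{Lin_Toroidal_Links_0}, L.\ref{Lin_Toroidal_Links} and L.\ref{Joint_spectral_variation_inequality_2} exactly as there produces $0<\delta\le\nu\le\varepsilon/2$, an isospectral approximant $\Psi=\mathrm{Ad}[W^{*}]$ with $W\in\U{n}$, and a unitary path $\mathcal{W}_t:=e^{-itH_W}$ (with $e^{iH_W}=W$) such that $\|[H_W,U_j]\|\le\varepsilon/2$ and $\|[\mathcal{W}_t,\Psi(U_j)]\|\le\varepsilon/2$ for all $t\in\mathbb{I}$ and all $j$. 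Since $\Psi$ is a $*$-automorphism, each $\Psi(U_j)$ is unitary with $\sigma(\Psi(U_j))=\sigma(U_j)\subset\TT$, and in the chosen basis $\Psi(U_j)$ is diagonal, so $\{\Psi(U_1),\dots,\Psi(U_N),V_1,\dots,V_N\}$ is pairwise commuting.

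For the curved factor I would set $\breve{u}^{j}_{t}:=\Ad{\mathcal{W}_t}{U_j}$, which is a path of unitaries solving $U_j\rightsquigarrow_{\varepsilon/2}\Psi(U_j)$ with $\ell_{\|\cdot\|}(\breve{u}^{j}_{t})\le\varepsilon/2$, just as in T.\ref{local_connection_of_N_tuples_of_normal_contractions}. For the flat factor, note that $\|V_j^{*}\Psi(U_j)-\I_n\|=\|\Psi(U_j)-V_j\|\le\nu$; shrinking $\delta$ (hence $\nu$) so that $\nu<2$ and $2\arcsin(\nu/2)\le\varepsilon/2$, the principal logarithm $K_j:=-i\log(V_j^{*}\Psi(U_j))$ is a well-defined hermitian matrix with $\|K_j\|\le 2\arcsin(\nu/2)\le\varepsilon/2$ lying in the abelian algebra generated by $V_j$ and $\Psi(U_j)$ (all diagonal in the chosen basis). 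Then $\bar{u}^{j}_{t}:=\Psi(U_j)e^{-itK_j}$ is a path of unitaries with $\bar{u}^{j}_{0}=\Psi(U_j)$, $\bar{u}^{j}_{1}=\Psi(U_j)e^{-iK_j}=V_j$, $\ell_{\|\cdot\|}(\bar{u}^{j}_{t})=\|K_j\|\le\varepsilon/2$, and $\|\bar{u}^{j}_{t}-V_j\|=\|e^{-itK_j}-e^{-iK_j}\|\le\|K_j\|\le\varepsilon$.

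Finally I would put $u^{j}:=\breve{u}^{j}\circledast\bar{u}^{j}$. Each $u^{j}$ is then a piecewise analytic path of unitaries (conjugations and exponentials are analytic in $t$) solving $U_j\rightsquigarrow V_j$, with $\ell_{\|\cdot\|}(u^{j}_{t})=\ell_{\|\cdot\|}(\breve{u}^{j}_{t})+\ell_{\|\cdot\|}(\bar{u}^{j}_{t})\le\varepsilon$ and $\|u^{j}_{t}-V_j\|\le\varepsilon$ (on the curved half, $\|\breve{u}^{j}_{t}-\Psi(U_j)\|+\|\Psi(U_j)-V_j\|\le\varepsilon/2+\nu\le\varepsilon$). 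Commutativity is preserved: $[\breve{u}^{j}_{t},\breve{u}^{k}_{t}]=\Ad{\mathcal{W}_t}{[U_j,U_k]}=0$ and the $\bar{u}^{j}_{t}$ all lie in the abelian algebra of diagonal matrices, so $[\bar{u}^{j}_{t},\bar{u}^{k}_{t}]=0$; hence the $u^{j}$ are toral matrix links in the sense of D.\ref{matrix_links} (with $N=n$ and the compression and lifting taken to be the identity, as in T.\ref{local_connection_of_N_tuples_of_normal_contractions}). Conjugating back by the initial basis-change unitary finishes the construction, and the degenerate case $\varepsilon=0$ is settled by taking $\delta=0$ and constant paths.

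The main obstacle is exactly this flat factor. In the normal-contraction case one joins $\Psi(x_j)$ to $y_j$ by a straight segment and stays inside $\N{n}(\disk)$, but $\U{n}$ is not convex, so one must check that the exponential replacement $\Psi(U_j)e^{-itK_j}$ (i) has the prescribed endpoints, (ii) remains unitary, (iii) still respects the joint commuting structure — which is what forces $K_j$ to be located inside the abelian algebra of $V_j$ and $\Psi(U_j)$ — and (iv) has length and sup-distance to $V_j$ controlled linearly by $\nu$; the quantitative input for (iv) is the elementary estimate $\|\log w\|\le 2\arcsin(\|w-\I_n\|/2)$ for unitaries $w$ with $\|w-\I_n\|<2$. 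This is the unitary counterpart of the way C.\ref{local_hermitian_interpolants} adapts T.\ref{local_connection_of_N_tuples_of_normal_contractions} to the hermitian case by observing that the operations used there preserve the relevant constraint.
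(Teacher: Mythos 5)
Your proposal is correct and follows essentially the same route as the paper, which likewise keeps the curved factor $\mathrm{Ad}[\mathcal{W}_t]$ from T.\ref{local_connection_of_N_tuples_of_normal_contractions} and replaces the line segment by the flat unitary path $Ue^{t\ln(U^{*}V)}$ between commuting unitaries --- your $\Psi(U_j)e^{-itK_j}$ with $K_j=-i\log(V_j^{*}\Psi(U_j))$ is exactly that path. The quantitative details you supply (the $2\arcsin(\nu/2)$ bound, the location of $K_j$ in the abelian algebra, the commutativity check) are left implicit in the paper's two-line argument but are consistent with it.
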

\begin{proof}
 Since for any $C^*$-automorphisms $\Psi$ we have that $\Psi(\U{n})\subseteq \U{n}$, and since any two commuting unitaries 
 $U$ and $V$ can connected by a flat unitary path $\bar{U}_t:=Ue^{t\ln\left(U^{*}V\right)}$, for $0\leq t\leq 0$. We will 
 have that the result can be derived using a similar argument to the one implemented in the proof of 
 T.\ref{local_connection_of_N_tuples_of_normal_contractions}.
 \end{proof}

\subsubsection{Lifted local piecewise analytic connectivity}

\label{main_lifted_results}

Let us denote by $\kappa$ the matrix compression $M_{2n}\to M_n$ defined by the mapping
 \begin{eqnarray*}
  \kappa:M_{2n}\to M_n,
        \left(
        \begin{array}{cc}
         x_{11} & x_{12} \\
         x_{21} & x_{22}
        \end{array}
        \right)
        \mapsto x_{11}.    
 \end{eqnarray*}
Let us write $\imath_2:M_n\to M_{2n}$ to denote the $C^*$-homomorphism defined by the expression $\imath_2(x):=x\oplus x=\I_2\otimes x$.

\begin{definition}[Standard dilations]
\label{Std_dilations}
 Given a $C^*$-automorphism $\Psi:=\mathrm{Ad}[W]$ (with $W\in \U{n}$) in $M_n$, we will denote by 
$\Psi^{[s]}$ the $C^*$-automorphism in $M_{2n}$ defined by the expression 
$\Psi^{[s]}:=\mathrm{Ad}[\I_2\otimes W]=\mathrm{Ad}[W\oplus W]$. We call $\Psi^{[s]}$ a standard dilation 
of $\Psi$.
\end{definition}

\begin{definition}[$\ZZ/2$-dilations]
\label{Z2_dilations}
Given a $C^*$-automorphism $\Psi:=\mathrm{Ad}[W]$ (with $W\in \U{n}$) in $M_n$, we will denote by 
$\Psi^{[2]}$ the $C^*$-automorphism in $M_{2n}$ defined by the expression 
$\Psi^{[2]}:=\mathrm{Ad}[(\Sigma_2\otimes \I_n)(W^*\oplus W)]$. We call $\Psi^{[2]}$ a $\ZZ/2$-dilation 
of $\Psi$.
\end{definition}

\begin{remark}
\label{local_matching_remark}
 It can be seen that $\kappa(\imath_2(x))=x$ for any $x\in M_{2n}$, it can also be seen that 
 $\kappa(\Psi^{[2]}(\imath_2(x)))=\kappa(\Psi^{[s]}(\imath_2(x)))$.
\end{remark}

\begin{theorem}[Lifted local toral connectivity]
 \label{lifted_connectivity}
\label{lifted_local_connection_of_N_tuples_of_normal_contractions}
 Given $\varepsilon>0$, there is $\delta>0$ such that, for any $2N$ normal contractions $\NS{x}{N}$ and $\NS{y}{N}$ in $M_n$ which 
satisfy the relations
\[
\left\{
\begin{array}{l}
 [x_j,x_k]=[y_j,y_k]=0, \:\:\: 1\leq j,k\leq N,\\
 \|x_j-y_j\|\leq \delta, \:\:\: 1\leq j\leq N,
\end{array}
\right.
\]
there is a $C^*$-homomorphism $\Phi:M_n\to M_{2n}$ and $N$ toral matrix 
links $X^1,\ldots,$ $X^N$ in $C(\mathbb{I},M_{2n})$, which solve the problems
\[
 \Phi(x_j) \rightsquigarrow y_j\oplus y_j, \:\:\: 1\leq j\leq N,
\]
and satisfy the constraints
\[
\left\{
\begin{array}{l}
 \kappa(\Phi(x_j))=x_j,\\
 \|\Phi(x_j)-x_j\oplus x_j\|\leq \varepsilon,\\
 \|X_t^j-y_j\oplus y_j\|\leq \varepsilon,
\end{array}
\right.
\]
for each $1\leq j,k\leq N$ and each $t\in \mathbb{I}$. Moreover, $\ell_{\|\cdot\|}(X_t^j)\leq \varepsilon, \:\:\: 1\leq j\leq N$.
\end{theorem}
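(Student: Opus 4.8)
The plan is to run the curved-then-flat scheme of T.\ref{local_connection_of_N_tuples_of_normal_contractions}, but to perform the ``curved'' step only after passing from $M_n$ to $M_{2n}$, where the unitary implementing the isospectral approximant can be replaced by a symmetry $\bar S$ (i.e.\ $\bar S=\bar S^{*}$, $\bar S^{2}=\I$). A symmetry has $\sigma(\bar S)\subseteq\{-1,1\}$ and the explicit self-adjoint logarithm $h:=\tfrac{\pi}{2}(\I-\bar S)$, for which $[h,a]=-\tfrac{\pi}{2}[\bar S,a]$; the constant $\pi/2$ here does not depend on $n$, and this is precisely what makes the resulting $\delta$ uniform in the matrix size (the $\delta$ in T.\ref{local_connection_of_N_tuples_of_normal_contractions} instead degrades like $1/n$, since there the conjugating unitary lives in $M_n$ and only enjoys a spectral gap of size $2\pi/n$). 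So I would fix $\varepsilon>0$, set $\nu:=\varepsilon/(2\pi)$, and let $\delta$ be the constant that L.\ref{Joint_spectral_variation_inequality_2} associates to $\nu$ --- which, since the constant $c_N$ there depends only on $N$, depends only on $\varepsilon$ and $N$. Given commuting families of normal contractions $\NS{x}{N},\NS{y}{N}$ in $M_n$ with $\|x_j-y_j\|\le\delta$, L.\ref{Joint_spectral_variation_inequality_2} yields $\Psi=\mathrm{Ad}[V]$, $V\in\U{n}$, with $\sigma(\Psi(x_j))=\sigma(x_j)$, $[\Psi(x_j),y_j]=0$ and $\max\{\|\Psi(x_j)-x_j\|,\|\Psi(x_j)-y_j\|\}\le\nu$; I would also record, from the proof of that lemma, that $V$ may be chosen so that $\Psi(x_1),\dots,\Psi(x_N)$ and $y_1,\dots,y_N$ are simultaneously diagonal (the $y_k$ are diagonalised and $\Psi(x_j)=\mathcal{T}^{*}\diag{\Lambda(x_j)}\mathcal{T}$ for a permutation matrix $\mathcal{T}$).

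Next I would put $\Phi:=\mathrm{id}_{M_n}\oplus\Psi^{2}:M_n\to M_{2n}$, i.e.\ $\Phi(x)=x\oplus\Psi^{2}(x)$. This is a unital injective $*$-homomorphism, $\kappa(\Phi(x_j))=x_j$, and $\|\Phi(x_j)-x_j\oplus x_j\|=\|\Psi^{2}(x_j)-x_j\|\le 2\|\Psi(x_j)-x_j\|\le 2\nu\le\varepsilon$ because $\Psi$ is isometric, so the first two displayed constraints hold. The reason for $\Psi^{2}$ (rather than $\mathrm{id}$ or $\Psi$) is that, with $\bar S:=(\Sigma_2\otimes\I_n)(V\oplus V^{*})=\left(\begin{smallmatrix}0&V^{*}\\V&0\end{smallmatrix}\right)$ --- the unitary implementing the $\ZZ/2$-dilation of $\Psi^{-1}$ --- one has $\bar S^{2}=\I_{2n}$ and $\mathrm{Ad}[\bar S](\Phi(x_j))=\Psi^{-1}(\Psi^{2}(x_j))\oplus\Psi(x_j)=\imath_2(\Psi(x_j))$, which is exactly the intermediate matrix the curved step should reach; moreover a one-line block computation gives $\|[\bar S,\Phi(x_j)]\|=\|\Psi(x_j)-x_j\|\le\nu$. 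Hence with $h:=\tfrac{\pi}{2}(\I_{2n}-\bar S)$ (so $h=h^{*}$, $e^{ih}=\bar S$) one gets $\|[h,\Phi(x_j)]\|=\tfrac{\pi}{2}\|[\bar S,\Phi(x_j)]\|\le\tfrac{\pi}{2}\nu=\varepsilon/4$ for every $j$.

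Then I would set $X^{j}:=\breve X^{j}\circledast\bar X^{j}$ with curved factor $\breve X^{j}_t:=e^{ith}\Phi(x_j)e^{-ith}$ and flat factor $\bar X^{j}_t:=\imath_2\big((1-t)\Psi(x_j)+t\,y_j\big)$. The curved factor is analytic in $t$, runs from $\Phi(x_j)$ to $\mathrm{Ad}[\bar S](\Phi(x_j))=\imath_2(\Psi(x_j))$, consists of unitary conjugates of $\Phi(x_j)$ --- hence normal contractions, as $\sigma(\Phi(x_j))=\sigma(x_j)\subseteq\disk$ --- and has length $\int_{\mathbb{I}}\|\partial_t\mathrm{Ad}[e^{ith}](\Phi(x_j))\|\,dt=\|[h,\Phi(x_j)]\|\le\varepsilon/4$. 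The flat factor is affine (hence analytic) and is a path of normal contractions because $\Psi(x_j)$ and $y_j$ commute, are simultaneously diagonal and lie in $\disk$, so convex combinations stay normal with spectrum in $\disk$; it runs from $\imath_2(\Psi(x_j))$ to $y_j\oplus y_j$ with $\ell_{\|\cdot\|}(\bar X^{j})=\|\Psi(x_j)-y_j\|\le\nu$ and $\|\bar X^{j}_t-y_j\oplus y_j\|\le\nu$. Concatenating, $X^{j}\in C(\mathbb{I},M_{2n})$ is a piecewise-analytic normal path of contractions solving $\Phi(x_j)\rightsquigarrow y_j\oplus y_j$, with $\ell_{\|\cdot\|}(X^{j})\le\varepsilon/4+\nu\le\varepsilon$ and $\|X^{j}_t-y_j\oplus y_j\|\le\varepsilon/4+\nu\le\varepsilon$ for all $t$ (on the curved segment bound the distance to the endpoint by the segment length plus $\|\imath_2(\Psi(x_j))-\imath_2(y_j)\|\le\nu$). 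Finally, whenever $[x_j,x_k]=0$ we have $[\Phi(x_j),\Phi(x_k)]=[x_j,x_k]\oplus\Psi^{2}([x_j,x_k])=0$, so $[\breve X^{j}_t,\breve X^{k}_t]=0$; and $[\bar X^{j}_t,\bar X^{k}_t]=0$ because $\Psi(x_1),\dots,\Psi(x_N),y_1,\dots,y_N$ are simultaneously diagonal; so each $X^{j}$ is a toral matrix link, with lifting map $\mathbb{l}=\Phi$, trivial compression on $M_{2n}$, and the indicated piecewise conjugation-then-affine flow.

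The conceptual heart --- and the step I expect to be the main obstacle --- is choosing the right dilation: replacing $V\in\U{n}$ by the symmetry $\bar S\in\U{2n}$ trades a spectral gap of size $2\pi/n$ for the universal gap of a symmetry, at the mere cost of absorbing a spurious $\Psi^{2}$ into $\Phi$ (harmless, since $\Psi$ is $\nu$-close to the identity on the $x_j$'s). After that, the only thing requiring care is the uniformity bookkeeping --- checking that the $\delta$ produced by L.\ref{Joint_spectral_variation_inequality_2} genuinely carries no $n$-dependence; everything else is triangle-inequality estimates and tracking how $\circledast$-concatenation affects lengths and distances.
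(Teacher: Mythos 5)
Your proof is correct and follows essentially the same route as the paper's: your $\Phi=\mathrm{id}_{M_n}\oplus\Psi^{2}$ is exactly the paper's $(\Psi^{\dagger})^{[2]}\circ\imath_2\circ\Psi$ written out in block form, your symmetry $\bar S$ is the paper's $\hat W_s=(\Sigma_2\otimes\I_n)(W^{*}\oplus W)$, and the curved-conjugation-then-flat-segment concatenation with the $\pi/2$ logarithm of a symmetry is the same mechanism the paper uses to make $\delta$ dimension-free. If anything, you supply more detail than the paper on the points that need it (simultaneous diagonality of the $\Psi(x_j)$'s and $y_k$'s for the toral/normality claims on the flat factor, and the explicit $n$-independence bookkeeping).
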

\begin{proof}
  By L.\ref{Joint_spectral_variation_inequality_2} we will have that given $\varepsilon>0$, 
 there are $0<\delta\leq\nu= \frac{\varepsilon}{2\pi}$ and an isospectral approximant $\Psi:=\mathrm{Ad}[W^*]$ (with $W\in\U{n}$) for $\NS{x}{N}$ 
 and $\NS{y}{N}$ such that, $\max\{\|x_j-\Psi(x_j)\|,\|y_j-\Psi(x_j)\|\}\leq \nu$. By setting 
 $\Phi:=(\Psi^{\dagger})^{[2]}\circ \imath_2\circ \Psi$, by D.\ref{Z2_dilations}, D.\ref{Std_dilations} and R.\ref{local_matching_remark} it can be seen that 
 $\Phi:M_n\to M_{2n}$ is a $C^*$-homomorphism such that $\|\Phi(x_j)-\imath_2(x_j)\|=\|\Phi(x_j)-x_j\oplus x_j\|\leq \varepsilon$, for each 
 $1\leq j\leq N$.

 Since $(\Psi^{\dagger})^{[2]}:=\mathrm{Ad}[\hat{W}_s]$ with $\hat{W}_s:=(\Sigma_2\otimes \I_n)(W^*\oplus W)$ and 
 since $\hat{W}_s\in \U{2n}\cap \mathbb{H}(2n)$, we will have that $\hat{W}_s$ can be represented as 
 $\hat{W}_s=e^{i\frac{\pi}{2}(\hat{W}_s-\I_{2n})}$ for any $n\geq 1$. If we set $\tilde{X}_j:=\Psi^{[s]}(\imath_2({x}_j))$, 
 $1\leq j\leq N$, we also have that there is a unitary path 
 $\{\mathcal{W}_t\}_{t\in\mathbb{I}}\subset M_{2n}$ with $\mathcal{W}_t:=e^{i\frac{\pi (1-t)}{2}(\hat{W}_s-\I_{2n})}$, which satisfies the 
 conditions  $\mathcal{W}_0=\hat{W}_s$, $\mathcal{W}_1=\I_{2n}$, together with the normed estimates
 \begin{eqnarray*}
  \|\mathcal{W}_t\tilde{X}_j-\tilde{X}_j\mathcal{W}_t\|&=&|\cos(\pi t/2)|\|\hat{W}_s\tilde{X}_j-\tilde{X}_j\hat{W}_s\|\\
                                      &\leq&\|\hat{W}_s\tilde{X}_j-\tilde{X}_j\hat{W}_s\| \leq \nu,
 \end{eqnarray*}
for each $1\leq j\leq N$ and each $0\leq t\leq 1$. Moreover, for each $1\leq j\leq N$ we have that the paths 
$\breve{X}^j_t:=\Ad{\mathcal{W}_t}{\tilde{X}_j}$ satisfy the normed estimates
\begin{eqnarray*}
 \ell_{\|\cdot\|}(\breve{X}_t^j)&=&\int_{\mathbb{I}}\|\partial_t \Ad{\mathcal{W}_t}{\tilde{X}_j})\|dt,\\
                                                  &=&\frac{\pi}{2}\|\hat{W}_s\tilde{X}_j-\tilde{X}_j\hat{W}_s\| \leq \nu.
\end{eqnarray*}

For each $1\leq j\leq N$, we can now use the flat paths $\bar{X}_t^j:=(1-t)\tilde{X}_j+t\imath_2(y_j)$ together with 
the previously described curved paths $\breve{X}^j$ to construct the solvent toral matrix links $X^1,\ldots,X^N\in C([0,1],M_{2n})$ 
we are looking for, and which can be defined by $X^j:=\breve{X}^j\circledast \bar{X}^j$ for each $1\leq j\leq N$, and we are done.
\end{proof}

\begin{remark}
 It can be seen that by using the technique implemented in the proof of T.\ref{lifted_local_connection_of_N_tuples_of_normal_contractions} one can 
 obtain lifted versions of C.\ref{local_connectivity_N_tuples_commuting_unitaries} and C.\ref{local_hermitian_interpolants}.
\end{remark}

\begin{remark}
\label{detection_methods}
 As a consequence of T.\ref{lifted_local_connection_of_N_tuples_of_normal_contractions} we can derive simple detection methods to identify families 
 of pairwise commuting matrices in $M_n$ that can be connected uniformly via piecewise analytic toral matrix links. The existence of these detection methods 
 raises some interesting questions for further studies.
\end{remark}

\begin{remark}
 We can interpret T.\ref{lifted_local_connection_of_N_tuples_of_normal_contractions} as an existence theorem of solutions to lifted 
 connectivity problems defined on matrix representations of the form
 \[
 \xymatrix{
& C_{\varepsilon}^*\langle\mathbb{Z}/2\times \mathbb{Z}\rangle \ar[r] & C^*(\hat{U}_s,\hat{V}) \ar[r] &
M_{2n} \ar[d] \\
C^*\langle\mathbb{F}_2\rangle \ar[r] \ar[ru] & C_{\delta}(\mathbb{T}^{2}) \ar[r] & C^*(U,V) \ar[u] \ar[r]  & M_n
},
\]
with $\hat{U}_s=(\Sigma_2\otimes \I_n)(U^*\oplus U)$ and $\hat{V}=V\oplus V$.
\end{remark}

Some further applications of T.\ref{lifted_local_connection_of_N_tuples_of_normal_contractions} to approximation of matrix 
words and norm behavior will be presented in \cite{Vides_matrix_words}.

\subsubsection{Matrix Klein Bottles: Local matrix deformations and special symmetries}
  Using T.\ref{lifted_local_connection_of_N_tuples_of_normal_contractions} we can solve all connectivity problems 
  (together with their softened versions) in $M_n$ that can be reduced to connectiviy problems of the form 
  $x\rightsquigarrow_{\varepsilon} x^*$ in $\N{n}(\disk)$, with $x^*=TxT$ and $T^2=\I_n$.

 \begin{remark}
 For each $\varepsilon\in [0,2]$, we can use the previously described symmetries and $\mathscr{D}_{\mathbb{T}}$ to 
 interpret $\bigcup_{x\in M_n}\{x\rightsquigarrow_\varepsilon x^*\}$ as matrix analogies of the {\em Klein bottle}.
 \end{remark}
 
  By a {\em softened matrix Klein bottle} we mean that the symmetries are softened, in particular we can consider 
  the connectiviy problems $x\rightsquigarrow_{\varepsilon} x^*$ and $y\rightsquigarrow_{\varepsilon} y^*$ in 
  $\N{n}(\disk)$ subject to 
  the normed constraints $\|xy-yx\|\leq \delta$, $\|x^*-TxT\|,\|xT-Ty\|\leq \delta$ 
  and $T^2=\I_n$. The details regarding to the solvability of these local connectivity problems will be addressed in 
  future communications.

\subsection{$C^0$ uniform local connectivity of pairs of unitaries and piecewise analytic approximants}

\label{C_0_technique}

The technique presented in this section can be used to solve local connectivity problems 
in matrix representations of the form $C_{\varepsilon}(\TT[2])\to M_n \leftarrow C(\TT[2])$ uniformly via $C^0$-unitary paths.

Suppose $U_{t}$ and $V_{t}$ are unitary matrices in $\mathbf{M}_{n}(\mathbb{C})$
for $t=0$ and $t=1$ and we define 
\begin{equation}
U_{t}=U_{0}e^{t\ln\left(U_{0}^{*}U_{1}\right)}\label{eq:U_t}
\end{equation}
 and 
\begin{equation}
V_{t}=V_{0}e^{t\ln\left(V_{0}^{*}V_{1}\right)}.\label{eq:V_t}
\end{equation}
For $t=0$ or $t=1$ the $C^{*}$-algebra generated by $U_{t}$ and
$V_{t}$ is abelian, so select a MASA $C_{t}\cong\mathbb{C}^{n}$
in each case. Let
\[
A(C_{0},C_{1})=\left\{ \left.X\in C\left([0,1],\mathbf{M}_{n}(\mathbb{C})\right)\right|X(0)\in C_{0}\mbox{ and }X(1)\in C_{1}\right\} .
\]

\begin{lemma}
\label{lem:tsr_one}The $C^{*}$-algebra $A(C_{0},C_{1})$ has stable
rank one.\end{lemma}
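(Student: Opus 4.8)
The plan is to show that $A(C_0, C_1)$ has stable rank one by exhibiting it as an inductive limit of interval algebras with the correct boundary conditions, or more directly by invoking a known stable rank computation for such ``cylinder with endpoint constraints'' algebras. First I would observe that $C_0$ and $C_1$ are each a copy of $\mathbb{C}^n$ sitting inside $M_n$ as MASAs, so after conjugating by a unitary we may assume $C_0$ is the standard diagonal MASA $\mathbb{D}_n$, and $C_1 = \mathrm{Ad}[Q](\mathbb{D}_n)$ for some $Q \in \U{n}$. Then $A(C_0,C_1)$ is $*$-isomorphic to the algebra
\[
\left\{ X \in C([0,1], M_n) \;\middle|\; X(0) \in \mathbb{D}_n,\; X(1) \in Q^*\mathbb{D}_n Q \right\},
\]
which is a one-dimensional noncommutative CW complex (a pullback of $C([0,1],M_n)$ against the two finite-dimensional subalgebras at the endpoints). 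The key structural fact is that $C([0,1], M_n)$ has stable rank one, and that imposing finite-dimensional (hence stable rank one) boundary conditions at the two endpoints of the interval does not increase the stable rank.

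The cleanest route is to realize $A(C_0,C_1)$ as a pullback: let $\partial\colon C([0,1],M_n) \to M_n \oplus M_n$ be the evaluation $X \mapsto (X(0), X(1))$, and let $B := C_0 \oplus C_1 \subseteq M_n \oplus M_n$. Then $A(C_0, C_1) = \partial^{-1}(B)$, i.e. it is the pullback of $C([0,1],M_n) \xrightarrow{\partial} M_n \oplus M_n \xleftarrow{\iota} B$. Now I would use the following: the ideal $I = C_0([0,1), M_n) \cap C_0((0,1], M_n) = \{X : X(0) = X(1) = 0\} = C_0((0,1), M_n) \cong S M_n$ (suspension of $M_n$) is a common ideal of both $A(C_0,C_1)$ and $C([0,1],M_n)$, with $A(C_0,C_1)/I \cong C_0 \oplus C_1 \cong \mathbb{C}^{2n}$. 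Since the suspension $SM_n$ is stably isomorphic to $SM_1 = C_0(0,1)$, which has stable rank one (it is the cone-like algebra $C_0$ of a one-dimensional space — indeed $\mathrm{tsr}(C_0(X)) = \lceil \dim X / 2 \rceil + 1$ gives $\mathrm{tsr}(C_0(\RR)) = 1$), and since the quotient $\mathbb{C}^{2n}$ has stable rank one, I would finish by applying the standard result of Rieffel that stable rank one is preserved under extensions provided a mild condition holds — here both the ideal and quotient have stable rank one, and connecting maps in $K_1$ behave well because $K_1(SM_n) = K_0(M_n) = \mathbb{Z}$ surjects appropriately.

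The main obstacle is the extension step: stable rank one is \emph{not} automatically preserved under extensions $0 \to I \to A \to A/I \to 0$ even when both $I$ and $A/I$ have stable rank one — one needs the connecting map $\partial_1 \colon K_1(A/I) \to K_0(I)$ to be surjective, or some substitute. Here $K_1(\mathbb{C}^{2n}) = 0$, so actually the index map is trivial, and one must instead argue directly. The more robust approach, which I expect to be the one the authors take, is therefore to prove directly that every element of $A(C_0,C_1)$ can be approximated by invertibles: given $X \in A(C_0,C_1)$ and $\varepsilon > 0$, first perturb $X$ on a neighborhood of each endpoint so that near $t=0$ it is constant equal to an element of $C_0$ and near $t=1$ constant equal to an element of $C_1$; since $C_0, C_1 \cong \mathbb{C}^n$ have stable rank one, perturb these endpoint values to be invertible within $C_0$, $C_1$ respectively; then on the middle interval $[\eta, 1-\eta]$ one has an honest path in $M_n$ with invertible endpoints, and since $C([\eta,1-\eta], M_n)$ has stable rank one (in fact $GL_n(\CC)$ is path-connected and dense-in-itself arguments apply), one approximates by an invertible path matching the prescribed invertible endpoint values. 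Gluing these pieces gives an invertible element of $A(C_0,C_1)$ within $\varepsilon$ of $X$. The only genuinely delicate point is matching the endpoint values of the middle-interval perturbation to the (already chosen) invertible endpoint elements; this is handled because $GL_n(\CC)$ is connected, so one can prepend and append short invertible paths joining the given endpoint values to whatever the middle perturbation produced.
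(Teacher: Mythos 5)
Your argument takes a genuinely different route from the paper's. The paper works globally along the whole interval: it first perturbs $X$ to a piecewise-linear path whose segment endpoints are invertible with simple spectrum (keeping $X(0)\in C_0$ and $X(1)\in C_1$), then invokes Kato's theory of analytic paths to express the perturbed path, segment by segment, as $U_t\,\mathrm{diag}(\lambda_1(t),\dots,\lambda_n(t))\,U_t^{*}$ with piecewise-analytic eigenvalue functions, and finally nudges the finitely many eigenvalue branches that hit $0$ (all at interior points of segments, where $U_t$ is continuous) so that the whole path becomes invertible without moving the endpoints. You instead cut and paste: flatten $X$ near $t=0,1$, use that $C_0\cong C_1\cong\mathbb{C}^n$ and $C([\eta,1-\eta],M_n)$ each have stable rank one, and glue. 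Your opening analysis of the extension $0\to C_0((0,1),M_n)\to A(C_0,C_1)\to C_0\oplus C_1\to 0$ correctly diagnoses why a pure extension/$K$-theory argument does not close (stable rank one is not preserved by extensions, and the relevant index map vanishes here since $K_1(\mathbb{C}^{2n})=0$), and you rightly set it aside. Your route has the advantage of avoiding analytic perturbation theory entirely, at the cost of a gluing step the paper does not need.

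That gluing step is the one place where your argument is not justified as written. You must produce a path of invertibles from the chosen invertible $a_0\in C_0$ (which is $\varepsilon$-close to $X(0)$) to the value at $t=\eta$ of the invertible middle approximant (also close to $X(0)$ after the flattening) which \emph{stays uniformly close to} $X(0)$; otherwise the glued element of $A(C_0,C_1)$ is invertible but not within $\varepsilon$ of $X$, and density of invertibles is exactly what you are trying to prove. Connectedness of $GL_n(\mathbb{C})$ by itself only yields some connecting path, with no norm control. What is actually needed is that the set of invertibles in a small ball around a fixed matrix is path-connected. This is true — $GL_n(\mathbb{C})$ is the complement of the complex hypersurface $\{\det=0\}$ in $\mathbb{C}^{n^2}$, and the complement of a proper analytic subvariety of an open ball is path-connected — but it must be stated and used; the real analogue fails (already in $M_1(\mathbb{R})$ the invertibles near $0$ form two components), so the point is substantive rather than cosmetic. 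With that local connectivity lemma in hand, your construction goes through and yields a correct proof.
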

\begin{proof}
Starting with $X$ continuous with $X(t)$ in $C_{t}$ at the endpoints,
we can adjust this by a small amount, leaving the endpoints in $C_{t}$,
to get $X$ piece-wise linear, with the endpoints of every linear
segment having no spectral multiplicity and being invertible. Using
Kato's theory of analytic paths, we can get a piece-wise continuous
unitary $U_{t}$ and piece-wise analytic scalar paths $\lambda_{n}(t)$
so that the new path $Y\approx X$ satisfies
\[
Y(t)=U_{t}\left[\begin{array}{ccc}
\lambda_{1}(t)\\
 & \ddots\\
 &  & \lambda_{n}(t)
\end{array}\right]U_{t}^{*}.
\]
 There may be finitely may places where $Y(t)$ is not invertible.
These places will be in the interior of the segment so in an open
interval where $U_{t}$ is continuous. A small deformation of some
of the $\lambda_{j}$ will take the path through invertibles. We have
not moved the endpoints in the second adjustment so the constructed
element is in $A(C_{0},C_{1})$ and close to $X$.\end{proof}
\begin{lemma}
The endpoint-restriction map $\rho:A(C_{0},C_{1})\rightarrow C_{0}\oplus C_{1}$
induces an injection on $K_{0}$.\end{lemma}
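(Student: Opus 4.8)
The plan is to analyze the short exact sequence of $C^*$-algebras
\[
0 \to SM_n(\mathbb{C}) \to A(C_0,C_1) \xrightarrow{\rho} C_0 \oplus C_1 \to 0,
\]
where $SM_n(\mathbb{C}) = C_0((0,1), M_n(\mathbb{C}))$ is the kernel of the endpoint-restriction map (paths vanishing at both endpoints, with no constraint, since $C_t$ is a full subalgebra only at the endpoints). First I would write down the associated six-term exact sequence in $K$-theory. Because $SM_n(\mathbb{C})$ is the suspension of $M_n(\mathbb{C})$, we have $K_0(SM_n) \cong K_1(M_n) = 0$ and $K_1(SM_n) \cong K_0(M_n) = \mathbb{Z}$. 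Feeding this into the six-term sequence, the relevant portion reads
\[
K_1(C_0 \oplus C_1) \xrightarrow{\;\partial\;} K_0(SM_n) \to K_0(A(C_0,C_1)) \xrightarrow{\rho_*} K_0(C_0 \oplus C_1) \xrightarrow{\;\partial\;} K_1(SM_n),
\]
and since $K_0(SM_n) = 0$, exactness immediately gives that $\rho_*$ is injective on $K_0$. So in fact the injectivity is essentially formal once the extension is identified.

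The one genuine point requiring care is the identification of the kernel of $\rho$ and the fact that the extension is the honest suspension extension (or at least $K$-theoretically equivalent to it). The kernel consists of $X \in C([0,1], M_n(\mathbb{C}))$ with $X(0) = X(1) = 0$; this is literally $C_0((0,1), M_n(\mathbb{C})) = S M_n(\mathbb{C})$, with no subtlety at all, because the endpoint constraints $X(0) \in C_0$, $X(1) \in C_1$ are vacuous once $X(0) = X(1) = 0$. So the extension is exactly the suspension extension, and there is nothing to massage. The surjectivity of $\rho$ onto $C_0 \oplus C_1$ is clear: given $(a_0, a_1) \in C_0 \oplus C_1 \subseteq M_n \oplus M_n$, the straight-line path $t \mapsto (1-t)a_0 + t a_1$ lies in $A(C_0,C_1)$ and restricts to $(a_0,a_1)$.

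Thus the key steps, in order, are: (1) exhibit the extension $0 \to SM_n \to A(C_0,C_1) \xrightarrow{\rho} C_0 \oplus C_1 \to 0$, checking exactness at each spot; (2) compute $K_*(SM_n)$ via Bott periodicity / the suspension isomorphism, noting $K_0(SM_n) = 0$; (3) invoke the six-term exact sequence and read off injectivity of $\rho_*$ on $K_0$ from the term $K_0(SM_n) = 0$ immediately to its left. I do not anticipate a serious obstacle here; if anything, the only thing to be a little careful about is making sure the reader sees that the endpoint constraints genuinely disappear on the kernel, so that the kernel is the full suspension $SM_n$ rather than some subalgebra of it — but as noted this is immediate. (One could also phrase the whole argument without the six-term sequence, by noting directly that any projection in $M_k(A(C_0,C_1))$ is determined up to homotopy by its endpoint values, since $M_k(SM_n)$ has no nontrivial projections and projections cannot be connected to $0$ through $A(C_0,C_1)$ unless they are trivial at the endpoints; but the $K$-theory exact sequence is the cleanest route.)
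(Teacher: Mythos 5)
Your proposal is correct and follows essentially the same route as the paper: identify the kernel of $\rho$ as the suspension $SM_n(\mathbb{C})=C_0((0,1),M_n(\mathbb{C}))$, note that its $K_0$-group vanishes, and read off injectivity of $\rho_*$ from the six-term exact sequence. If anything, your write-up is more careful than the paper's, which misstates the kernel as $C([0,1],M_n(\mathbb{C}))$ where it clearly intends the suspension.
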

\begin{proof}
The kernel of $\rho$ is $C\left([0,1],\mathbf{M}_{n}(\mathbb{C})\right)$
which has trivial $K_{0}$-group. So this result follows from the
exactness of the usual six-term sequence in $K$-theory.\end{proof}
\begin{lemma}
\label{lem:Bott_trivial} Given unitaries $U$ and $V$ in $A(C_{0},C_{1})$,
with $\left\Vert \left[U,V\right]\right\Vert \nu_0$ as if D.\ref{Loring_Bott_index_definition} (so
the Bott index makes sense), $\mathrm{Bott}(U,V)$ is the trivial
element of $K_{0}\left(A(C_{0},C_{1})\right)$. \end{lemma}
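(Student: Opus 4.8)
The plan is to use the two preceding lemmas (stable rank one and $K_0$-injectivity of the restriction map $\rho$) together with the fact that the Bott element is a $K_0$-class that can be ``localized'' at the endpoints. First I would recall that $\mathrm{Bott}(U,V)$ lives in $K_0(A(C_0,C_1))$ and that, by naturality of the Bott construction under the $*$-homomorphism $\rho:A(C_0,C_1)\to C_0\oplus C_1$, we have $\rho_*(\mathrm{Bott}(U,V)) = \mathrm{Bott}(\rho(U),\rho(V))$ in $K_0(C_0\oplus C_1) = K_0(C_0)\oplus K_0(C_1)$. Since each $C_t\cong \CC^n$ is a commutative C$^*$-algebra, the unitaries $\rho(U)|_{C_0}$ and $\rho(V)|_{C_0}$ \emph{commute} (and likewise at $t=1$), so the self-adjoint element $e(\rho(U),\rho(V))$ of Definition \ref{Loring_Bott_index_definition} is built from commuting unitaries; in the commutative case the functions $f,g,h$ are chosen precisely so that $e(u,v)$ is then a genuine projection conjugate to $\mathrm{diag}(1,0)$, whence $\mathrm{Bott}(\rho(U),\rho(V)) = 0$ in each summand. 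Therefore $\rho_*(\mathrm{Bott}(U,V)) = 0$.

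Next I would invoke the second lemma, which says $\rho_*$ is injective on $K_0$. Combining $\rho_*(\mathrm{Bott}(U,V)) = 0$ with injectivity of $\rho_*$ yields $\mathrm{Bott}(U,V) = 0$ in $K_0(A(C_0,C_1))$, which is exactly the assertion. Note that Lemma \ref{lem:tsr_one} (stable rank one) is not strictly needed for this particular deduction, but it guarantees that $K_0$ of $A(C_0,C_1)$ behaves well (cancellation, so that the $K_0$-class of the difference of projections defining $\mathrm{Bott}$ is unambiguous and detects unitary equivalence of projections); I would mention it as ensuring the Bott obstruction is the \emph{only} obstruction in the subsequent connectivity argument, even if here it plays only a supporting role.

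The main obstacle I anticipate is the first step: justifying carefully that the Bott element vanishes at each endpoint because the generators restrict to commuting unitaries there. One has to be a little careful that $\rho(U)$ and $\rho(V)$ are not merely the pointwise restrictions of arbitrary almost-commuting unitaries but actually lie in the \emph{abelian} algebras $C_0$ and $C_1$ by the defining condition of $A(C_0,C_1)$ — this is where the specific construction of $C_t$ as a MASA containing $U_t, V_t$ at $t=0,1$ is used. Once that is in hand, the standard fact that $\mathrm{Bott}(u,v)=0$ whenever $[u,v]=0$ (the self-adjoint element $e(u,v)$ is homotopic through projections, or literally equal after the universal choice of $f,g,h$, to $\mathrm{diag}(1,0)$) closes the gap. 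The rest is a formal chase: naturality of $\mathrm{Bott}$ under $\rho$, then injectivity of $\rho_*$ from the previous lemma.
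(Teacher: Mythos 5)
Your argument is exactly the paper's: push $\mathrm{Bott}(U,V)$ forward along $\rho$, observe that the images land in the commutative algebra $C_0\oplus C_1$ where the Bott element is trivial, and conclude by the $K_0$-injectivity of $\rho$ established in the preceding lemma. The proposal is correct and matches the paper's proof, only spelled out in more detail.
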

\begin{proof}
By the previous lemma, we need only calculate $\mathrm{Bott}(\rho(U),\rho(V))$.
These unitaries are in a commutative $C^{*}$-algebra so they have
trivial Bott index.\end{proof}
\begin{theorem}
\label{C_0_unitary_interpolants}
Given $\epsilon>0$, there exists $\delta>0$ so that for all $n$,
given unitary matrices $U_{0},$ $U_{1},$ $V_{0},$ $V_{1}$ in $\mathbf{M}_{n}(\mathbb{C})$
with $U_{0}V_{0}=V_{0}U_{0}$,  $U_{1}V_{1}=V_{1}U_{1}$, $\left\Vert U_{0}-U_{1}\right\Vert \leq\delta$
and $\left\Vert V_{0}-V_{1}\right\Vert \leq\delta$, then there exists
continuous paths $U_{t}$ and $V_{t}$ between the given pairs of
unitaries with each $U_{t}$ and $V_{t}$ unitary, and with $U_{t}V_{t}=V_{t}U_{t}$,
$\left\Vert U_{t}-U_{0}\right\Vert \leq\epsilon$ and $\left\Vert V_{t}-V_{0}\right\Vert \leq\epsilon$
for all $t$.\end{theorem}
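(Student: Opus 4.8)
The plan is to realise the naive straight‑line interpolations \ref{eq:U_t}--\ref{eq:V_t} as one almost‑commuting pair of unitaries inside $A(C_0,C_1)$, to straighten that pair to a genuinely commuting one by exploiting the vanishing of its Bott invariant together with stable rank one, and finally to repair the four endpoints inside the abelian MASAs $C_0$ and $C_1$. First I would pin down the single analytic input and note that it carries no dimension dependence: let $\mathbf U$ and $\mathbf V$ denote the paths defined by \ref{eq:U_t} and \ref{eq:V_t} respectively, which are unitary elements of $A(C_0,C_1)$ with $\rho(\mathbf U)=(U_0,U_1)$ and $\rho(\mathbf V)=(V_0,V_1)$. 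Since $\|U_0^{*}U_1-\I_n\|=\|U_0-U_1\|\leq\delta$ and likewise for $V$, the principal logarithms $\ln(U_0^{*}U_1)$ and $\ln(V_0^{*}V_1)$ have norm $\leq c\delta$ for a universal constant $c$ once $\delta$ is below a universal threshold; then $\|\mathbf U(t)-U_0\|,\|\mathbf V(t)-V_0\|\leq 2c\delta$ for all $t$, and combined with $U_0V_0=V_0U_0$ and the triangle inequality this yields $\|\mathbf U(t)\mathbf V(t)-\mathbf V(t)\mathbf U(t)\|\leq 8c\delta$ for every $t$, i.e.\ $\|\mathbf U\mathbf V-\mathbf V\mathbf U\|\leq 8c\delta$, with no reference to $n$.

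Next, choosing $\delta$ small enough that $8c\delta\leq\nu_0$, the element $\mathrm{Bott}(\mathbf U,\mathbf V)\in K_0(A(C_0,C_1))$ is defined, and it vanishes by Lemma \ref{lem:Bott_trivial}. I would then invoke the uniform form of the principle that an almost‑commuting pair of unitaries with trivial Bott invariant sits close to a commuting pair, in the version valid for C$^*$-algebras of stable rank one (the Bratteli--Elliott--Evans--Kishimoto circle of ideas as developed by Lin, cf.\ \cite{Unitary_Homotopies_Bratteli,Approximate_Homotopy_Lin}): for every $\varepsilon'>0$ there is $\delta'>0$, depending on $\varepsilon'$ alone, such that whenever $A$ has stable rank one and $u,v\in\U{A}$ satisfy $\|uv-vu\|<\delta'$ and $\mathrm{Bott}(u,v)=0$, there are commuting $u',v'\in\U{A}$ with $\|u-u'\|,\|v-v'\|<\varepsilon'$. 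Applying this to $\mathbf U,\mathbf V$, which is legitimate by Lemma \ref{lem:tsr_one} as soon as $8c\delta<\delta'$, with $\varepsilon'=\varepsilon/4$ produces commuting unitaries $\mathbf U'=(U'_t)$, $\mathbf V'=(V'_t)$ in $A(C_0,C_1)$ within $\varepsilon/4$ of $\mathbf U,\mathbf V$: each $(U'_t,V'_t)$ is a commuting pair, $(U'_0,V'_0)\in C_0$, $(U'_1,V'_1)\in C_1$, and $U'_0,V'_0,U'_1,V'_1$ lie within $\varepsilon/4$ of $U_0,V_0,U_1,V_1$.

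Finally I would splice in endpoint corrections. As $\U{C_0}$ and $\U{C_1}$ are tori, join $U_0$ to $U'_0$ and $V_0$ to $V'_0$ by minimal arcs inside $\U{C_0}$, and $U'_1$ to $U_1$, $V'_1$ to $V_1$ by minimal arcs inside $\U{C_1}$; each such arc stays within $\varepsilon/4$ of its starting point, and along it the two components commute because they remain in a single abelian MASA. Running the initial corrections on $[0,\tfrac13]$, the reparametrised path $(U'_t,V'_t)$ on $[\tfrac13,\tfrac23]$, and the terminal corrections on $[\tfrac23,1]$ gives continuous paths $U_t,V_t$ from $(U_0,V_0)$ to $(U_1,V_1)$ running entirely through commuting unitary pairs, and the bound $\|U_t-U_0\|\leq\varepsilon$ (and its $V$-analogue) follows segment by segment from the triangle inequality, using $\|U'_t-U_0\|\leq\|U'_t-\mathbf U(t)\|+\|\mathbf U(t)-U_0\|\leq \varepsilon/4+2c\delta$ on the middle segment and the cruder $\varepsilon/4+\varepsilon/4+\delta$ on the last. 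A choice such as $\delta:=\min\{\delta_0,\ \nu_0/(8c),\ \delta'/(8c),\ \varepsilon/4\}$, where $\delta_0$ is the universal threshold from the first paragraph, then closes the argument.

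The hard part is the perturbation step: one needs the passage from the almost‑commuting pair with trivial Bott invariant to a commuting pair to come with a modulus $\varepsilon'\mapsto\delta'$ that is \emph{independent of the algebra $A(C_0,C_1)$, hence of $n$}. This is exactly what the preliminaries secure: stable rank one (Lemma \ref{lem:tsr_one}) is the structural hypothesis under which the homotopy machinery runs with an algebra‑independent constant, injectivity of $\rho_{*}$ on $K_0$ reduces the computation of $\mathrm{Bott}(\mathbf U,\mathbf V)$ to the manifestly trivial commutative case, and Lemma \ref{lem:Bott_trivial} provides the vanishing of precisely the obstruction that must vanish for the straight‑line pair to be deformable, rel endpoints, to a commuting one. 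The remaining ingredients — the logarithm estimate, the arc corrections in the MASAs, and the concatenation bookkeeping — are routine and dimension‑free.
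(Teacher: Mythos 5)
Your proposal is correct and follows essentially the same route as the paper's proof: straight-line interpolants viewed as an almost-commuting unitary pair in $A(C_0,C_1)$, Lemma \ref{lem:tsr_one} to invoke the stable-rank-one perturbation theorem (the paper cites \cite[Theorem 8.1.1]{Eilers_anticommutation} for the uniform modulus you describe), Lemma \ref{lem:Bott_trivial} to kill the obstruction, and endpoint corrections inside the abelian MASAs followed by concatenation. Your version merely makes the $\varepsilon$--$\delta$ bookkeeping more explicit than the paper does.
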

\begin{proof}
The paths $U_{t}$ and $V_{t}$ defined in equations~\ref{eq:U_t}
and \ref{eq:V_t} will be almost commuting unitary elements of $A(C_{0},C_{1})$.
By Lemma~\ref{lem:tsr_one} we may apply \cite[Theorem 8.1.1]{Eilers_anticommutation}
regarding approximating in $A(C_{0},C_{1})$ by commuting unitaries.
Lemma~\ref{lem:Bott_trivial} tells us there is no invariant to worry
about, so we can find $A_{t}$ and $B_{t}$ close of $U_{t}$ and
$V_{t}$ that are commuting continuous paths of unitaries with $A_{t}$
and $B_{t}$ in $C_{t}$ for $t=0,1$. The unitary elements in the
commutative $C_{t}$ are locally connected, so we can find a short
path from $U_{0}$ and $V_{0}$ to $A_{0}$ and $B_{0}$, and likewise
at the other end. Concatenating, we get a paths of commuting unitary
matrices from $U_{0}$ and $V_{0}$ to $U_{1}$ and $V_{1}$ so that
at every point we are close to some pair $(U_{t},V_{t})$. These then
are all close to $U_{0}$ and $V_{0}$.
\end{proof}

By combining T.\ref{lifted_local_connection_of_N_tuples_of_normal_contractions}, C.\ref{local_connectivity_N_tuples_commuting_unitaries} 
and T.\ref{C_0_unitary_interpolants} it can be seen that.

\begin{remark}[Piecewise analytic approximants of $C^0$ interpolants]
\label{analytic_approximants}
 Given $\epsilon>0$, there exists $\delta>0$ so that for all $n$,
given unitary matrices $U_{0},$ $U_{1},$ $V_{0},$ $V_{1}$ in $\mathbf{M}_{n}(\mathbb{C})$
with $U_{0}V_{0}=V_{0}U_{0}$,  $U_{1}V_{1}=V_{1}U_{1}$, $\left\Vert U_{0}-U_{1}\right\Vert \leq\delta$
and $\left\Vert V_{0}-V_{1}\right\Vert \leq\delta$, there exist continuous (interpolants) paths 
$U_{t}$ and $V_{t}$ in $M_{2n}$ which solve the problems $U_0\oplus U_0 \rightsquigarrow U_1\oplus U_1$
and $V_0\oplus V_0 \rightsquigarrow V_1\oplus V_1$ with each $U_{t}$ and $V_{t}$ unitary, and with 
$U_{t}V_{t}=V_{t}U_{t}$,
$\left\Vert U_{t}-U_{0}\oplus U_0\right\Vert \leq\epsilon$ and $\left\Vert V_{t}-V_{0}\oplus V_0\right\Vert \leq\epsilon$
for all $t$. There are also a C$^*$-homomorphism $\Psi:M_n\to M_{2n}$ such that 
\[
 \max\{\|\Psi(U_0)-U_1\oplus U_1\|,\|\Psi(U_0)-U_0\oplus U_0\|,\|\Psi(V_0)-V_1\oplus V_1\|,\|\Psi(V_0)-V_0\oplus V_0\|\}\leq \epsilon,
\]
and two piecewise analytic unitary pairwise commuting paths $\hat{U},\hat{V}\in C([0,1],M_{2n})$ which solve the 
problems $\Psi(U_0) \rightsquigarrow U_1\oplus U_1$, 
$\Psi(V_0) \rightsquigarrow V_1\oplus V_1$ with $\max\{\|\hat{U}_t-U_t\|,\|\hat{V}_t-V_t\|\}\leq \epsilon$ for 
each $0\leq t\leq 1$. Moreover, $\ell_{\|\cdot\|}(\hat{U}_t)\leq \epsilon$ and $\ell_{\|\cdot\|}(\hat{V}_t)\leq \epsilon$.
\end{remark}

\subsection{Jointly compressible matrix sets}\label{compressible_matrix_sets} Given $0<\delta\leq \varepsilon$, we can now consider an alternative approach to 
the local connectivity problem involving two $N$-sets of pairwise commuting normal matrix contractions $\NS{X}{N}$ and $\NS{Y}{N}$ such 
that $\|X_j-Y_j\|\leq\delta$ for 
each $1\leq j\leq N$. The approach that we will consider in this section consists of considering the existence 
of a normal contraction $\hat{X}$ such that $\NS{X}{N}\in C^*(\hat{X})$, and which also satisfies the constraint $\|\hat{X}-X_j\|\leq \varepsilon$ for some 
$1\leq j\leq N$. A matrix $\hat{X}$ which satisfies the previous conditions will be called a {\em \bf nearby generator} for 
$\NS{X}{N}$, it can be seen that for any $\delta\leq \nu\leq \varepsilon$ one can find a flat analytic path $\bar{X}\in C([0,1],\Minf)$ 
that performs the deformation $X_j\rightsquigarrow_{\nu}\hat{X}$, where $\hat{X}$ is a nearby generator for 
$\NS{X}{N}$.

Given any joint isospectral approximant $\Psi$ with respect to the families normal contractions described in the previous paragraph, along the lines 
of the program that we have used 
to derive the connectivity results 
T.\ref{local_connection_of_N_tuples_of_normal_contractions} and T.\ref{lifted_local_connection_of_N_tuples_of_normal_contractions}, we 
can use L.\ref{Joint_spectral_variation_inequality_2} to find a C$^*$-automorphism which solves the extension problem described by 
the diagram,
\begin{equation}
\xymatrix{
& C^*(\hat{X}) \ar@{-->}[d]^{\hat{\Psi}}\\
C^*(\NS{X}{N}) \ar@{^{(}->}[ru] \ar[r]_{\Psi} & C^*(\NS{Y}{N})
}
 \label{NJC_lifting}
\end{equation}
and satisfies the relations $\Psi(X_j)=\hat{\Psi}(X_j)$ for each $1\leq j\leq N$ together with the normed constraints
\[
\max\{\|\hat{\Psi}(\hat{X})-\hat{X}\|,\max_j\{\|\hat{\Psi}(X_j)-X_j\|,\|\hat{\Psi}(X_j)-Y_j\|\}\}\leq \varepsilon. 
\]
We refer to the C$^*$-automorphism $\hat{\Psi}$ in \ref{NJC_lifting} as a {\bf compression} of $\Psi$ or a {\bf compressive joint isospectral approximant} 
({\bf CJIA}) for the $N$-sets of normal contractions. Let us now consider a special type of inner C$^*$-automorphisms that can be described as follows.

\begin{definition}[Uniformly compressible JIA] Given $0<\delta\leq \varepsilon$ and two $N$-sets of pairwise commuting 
normal contractions $\NS{X}{N}$ and $\NS{Y}{N}$ in $\Minf$ such that $\|X_j-Y_j\|\leq \delta$, $1\leq j\leq N$, a joint isospectral approximant 
$\Psi$ of the $N$-sets is said to be {\bf uniformly compressible} if there are a nearby generator $\hat{X}$ for $\NS{X}{N}$, a compression $\hat{\Psi}:=\mathrm{Ad}[W]$ (with $W\in \U{\Minf}$) of $\Psi$ and a unitary 
$\hat{W}\in \hat{\Psi}(C^*(\hat{X}))$ such that $\|W-\hat{W}\|\leq \varepsilon$. We refer to the $2N$ normal contractions 
$\NS{X}{N}$ and $\NS{Y}{N}$ for which there exists a uniformly compressible JIA ({\bf UCJIA}) as uniformly jointly compressible ({\bf UJC}).
\end{definition}

\begin{lemma}[Local connectivity of {\bf UJC} matrix sets]
\label{UJC_connectivity}
 Given $\varepsilon>0$, there is $\delta>0$, such that for any two $N$-sets of {\bf UJC} pairwise commuting normal contractions 
 $\NS{X}{N}$ and $\NS{Y}{N}$ in $\Minf$ such that $\|X_j-Y_j\|\leq \delta$ for each $1\leq j\leq N$, we will have that there are 
 $N$ toral matrix links $\mathbf{X}^1,\ldots,\mathbf{X}^N\in C([0,1],\Minf)$ that solve the interpolation 
 problem $X_j \rightsquigarrow_{\varepsilon} Y_j$, for each $1\leq j\leq N$.
\end{lemma}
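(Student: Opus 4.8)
The plan is to run the ``curved factor $\circledast$ flat factor'' construction from the proof of T.\ref{local_connection_of_N_tuples_of_normal_contractions}, but to replace its dimension-dependent ingredient --- Lin's lemmas L.\ref{Lin_Toroidal_Links_0}--L.\ref{Lin_Toroidal_Links}, which only control the logarithm of the conjugating unitary through a spectral gap of size $\geq 2\pi/n$ --- by the \textbf{UJC} data, which directly furnishes a conjugating unitary already within $\varepsilon$ of $\I$. Choose $\nu>0$ with $(\pi+1)\nu\leq\varepsilon$ and $\nu<2$. First I would apply L.\ref{Joint_spectral_variation_inequality_2} with tolerance $\nu$ to obtain $\delta>0$ and a joint isospectral approximant $\Psi$ for $\NS{X}{N}$, $\NS{Y}{N}$ with $\sigma(\Psi(X_j))=\sigma(X_j)$ and $\max\{\|\Psi(X_j)-X_j\|,\|\Psi(X_j)-Y_j\|\}\leq\nu$; moreover the construction in that proof lets one take $\Psi$ so that $\Psi(X_1),\dots,\Psi(X_N),Y_1,\dots,Y_N$ pairwise commute (after the change of basis used there the $Y_k$ are diagonal and $\Psi(X_j)=\mathcal{T}^{*}\diag{\Lambda(X_j)}\mathcal{T}$ is diagonal too). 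Then, using that the families are \textbf{UJC} for this $\Psi$ (with tolerance $\leq\nu$, shrinking $\delta$ if necessary), I obtain a nearby generator $\hat X$ with $\NS{X}{N}\in C^{*}(\hat X)$, a compression $\hat\Psi=\mathrm{Ad}[W]$ of $\Psi$ with $\hat\Psi(X_j)=\Psi(X_j)$, and a unitary $\hat W\in\hat\Psi(C^{*}(\hat X))$ with $\|W-\hat W\|\leq\nu$.

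The crux --- the step that buys dimension-uniformity --- is the following. Since $\hat X$ is normal, $C^{*}(\hat X)$ is abelian and so is its image $\hat\Psi(C^{*}(\hat X))$; since $X_j\in C^{*}(\hat X)$ we get $\Psi(X_j)=\hat\Psi(X_j)\in\hat\Psi(C^{*}(\hat X))$, whence $\hat W$ commutes \emph{exactly} with each $\Psi(X_j)$. Setting $Q:=\hat W^{*}W$ we have $\|Q-\I\|=\|W-\hat W\|\leq\nu<2$, and, using $WX_jW^{*}=\hat\Psi(X_j)=\Psi(X_j)$,
\[
QX_jQ^{*}=\hat W^{*}\bigl(WX_jW^{*}\bigr)\hat W=\hat W^{*}\,\Psi(X_j)\,\hat W=\Psi(X_j),\qquad 1\leq j\leq N,
\]
so $X_j$ and $\Psi(X_j)$ are conjugate by a single unitary $Q$ lying within $\nu$ of $\I$. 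Writing $Q=e^{iK}$ with $K=K^{*}$ obtained from the principal branch of $\log$ via continuous functional calculus (legitimate as $-1\notin\sigma(Q)$) gives $\|K\|\leq 2\arcsin(\nu/2)\leq\tfrac{\pi}{2}\nu$, hence $\|[K,X_j]\|\leq 2\|K\|\leq\pi\nu$ --- the commutator need not vanish, but its size is now bounded in terms of $\nu$ alone, with no reference to $n$.

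Finally I would assemble the links exactly as in T.\ref{local_connection_of_N_tuples_of_normal_contractions}: for each $j$ set the curved factor $\breve X^{j}_t:=e^{itK}X_je^{-itK}$ and the flat factor $\bar X^{j}_t:=(1-t)\Psi(X_j)+tY_j$, and put $\mathbf X^{j}:=\breve X^{j}\circledast\bar X^{j}\in C(\mathbb{I},\Minf)$. Each $\breve X^{j}_t$ is a normal contraction, analytic in $t$, with $\breve X^{j}_0=X_j$, $\breve X^{j}_1=QX_jQ^{*}=\Psi(X_j)$ and $\ell_{\|\cdot\|}(\breve X^{j})=\|[K,X_j]\|\leq\pi\nu$; each $\bar X^{j}_t$ is a normal contraction because $[\Psi(X_j),Y_j]=0$, with $\bar X^{j}_0=\Psi(X_j)$, $\bar X^{j}_1=Y_j$ and $\ell_{\|\cdot\|}(\bar X^{j})=\|\Psi(X_j)-Y_j\|\leq\nu$. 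Commutativity is preserved along the whole family: on the curved portion $[\breve X^{j}_t,\breve X^{k}_t]=e^{itK}[X_j,X_k]e^{-itK}=0$ since one and the same $e^{itK}$ conjugates every index, and on the flat portion $[\bar X^{j}_t,\bar X^{k}_t]=0$ since $\Psi(X_1),\dots,\Psi(X_N),Y_1,\dots,Y_N$ pairwise commute. Hence each $\mathbf X^{j}$ is a piecewise analytic toral matrix link (take $N=n$, $\mathbb{K}=\mathbb{l}=\mathrm{id}$ in D.\ref{matrix_links}) solving $X_j\rightsquigarrow Y_j$. Routing the distance estimate through $\Psi(X_j)$ yields $\|\mathbf X^{j}_t-Y_j\|\leq\|[K,X_j]\|+\|\Psi(X_j)-Y_j\|\leq(\pi+1)\nu\leq\varepsilon$ for all $t$, and likewise $\ell_{\|\cdot\|}(\mathbf X^{j})\leq(\pi+1)\nu\leq\varepsilon$, giving $X_j\rightsquigarrow_{\varepsilon}Y_j$. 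I expect the only real obstacle to be the bookkeeping that reconciles the tolerance fed to L.\ref{Joint_spectral_variation_inequality_2}, the tolerance in the \textbf{UJC} data, and the logarithm estimate so the constants land at exactly $\varepsilon$; all the geometry sits in the identity $QX_jQ^{*}=\Psi(X_j)$, which here does the work that Lin's spectral-gap lemmas did in T.\ref{local_connection_of_N_tuples_of_normal_contractions}.
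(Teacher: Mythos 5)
Your proposal is correct and follows essentially the same route as the paper's proof: extract from the \textbf{UJC} data the unitary $Z=\hat W^{*}W$ (your $Q$) with $\|\I-Z\|\leq\nu$, take its logarithm to build the curved factor $\mathrm{Ad}[e^{itK}](X_j)$ landing on $\hat\Psi(X_j)$, and concatenate with the flat segment $(1-t)\hat\Psi(X_j)+tY_j$. Your explicit verification that $QX_jQ^{*}=\Psi(X_j)$ via the abelianness of $\hat\Psi(C^{*}(\hat X))$ is a welcome sharpening of a step the paper leaves implicit, but it is the same argument.
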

\begin{proof}
 Since the $N$-sets of pairwise commuting normal contractions $\NS{X}{N}$ and $\NS{Y}{N}$ are {\bf UJC}, we have that given 
 $0<\delta\leq \nu\leq \varepsilon/2<1$, there are a normal contraction $\hat{X}\in \Minf$ which commutes with each $X_j$ together with a {\bf UCJIA} 
 $\hat{\Psi}=\mathrm{Ad}[W]$ for some $W\in \U{\Minf}$ and a unitary $\hat{W}\in \hat{\Psi}(C^*(\hat{X}))$ such that 
 \begin{equation}
  \|\I-\hat{W}^*W\|=\|W-\hat{W}\|\leq \nu<1.
  \label{compresion_inequality_1}
 \end{equation}
 Let us set $Z:=\hat{W}^*W$, as a consequence of the inequality \ref{compresion_inequality_1} we will have that there is a hermitian matrix $-\I\leq H_{Z}\leq \I$ in 
 $\Minf$ such that $e^{\pi iH_Z}=Z$. By using \ref{compresion_inequality_1} again, it can be seen that we can now use the curved paths 
 $\breve{\mathbf{X}}^j:=\Ad{e^{-\pi i t H_Z}}{X_j}$ to solve the problems $X_j\rightsquigarrow_{\varepsilon/2} \hat{\Psi}(X_j)$, and then we can solve 
 the problems $\hat{\Psi}(X_j)\rightsquigarrow_{\nu} Y_j$ using the flat paths $\bar{\mathbf{X}}^j:=(1-t)\hat{\Psi}(X_j)+tY_j$. We can construct the solvent toral matrix 
 links by setting $\mathbf{X}^j:=\breve{\mathbf{X}}^j\circledast \bar{\mathbf{X}}^j$ for each $1\leq j\leq N$. This completes the proof.
\end{proof}

\section{Hints and Future Directions}

The detection matrix representations of universal C$^*$-algebras that can be connected uniformly via piecewise analytic 
paths induces interesting problems which are topological/K-theoretical and computational in nature. Motivated by the $C^0$-connectivity technique, 
we consider that the use of T.\ref{local_connection_of_N_tuples_of_normal_contractions}, C.\ref{local_connectivity_N_tuples_commuting_unitaries} and 
T.\ref{lifted_local_connection_of_N_tuples_of_normal_contractions} and L.\ref{UJC_connectivity} to study local matrix connectivity in C$^*$-representations of the form 
\[
\xymatrix{
 C(\TT[N]) \ar@/^1pc/[rr] \ar[r] & M_{2n} & M_{n} \ar[l] & C_{\varepsilon}(\TT[2]) \ar[l] \ar@/^1pc/[ll]
}
\]
will present interesting challenges and questions 
that will be the subject of future study. In particular we are interested in the 
application of T.\ref{lifted_local_connection_of_N_tuples_of_normal_contractions}, C.\ref{local_connectivity_N_tuples_commuting_unitaries} and 
L.\ref{UJC_connectivity} to the study of the question. Is 
$C^*\langle\mathbb{F}_2\times \mathbb{F}_2\rangle$ RFD? (This is equivalent to 
{\bf Connes's embedding problem}.)

A better understanding 
of the geometric and approximate combinatorial nature of toroidal matrix links would provide a mutually benefitial interaction between matrix flows 
in the sense of Brockett \cite{Brockett_matching} and Chu \cite{Chu_num_lin}, topologically controlled linear algebra in the sense of 
Freedman and Press \cite{CLA_Freedman} and matrix geometric deformations in the sense of Loring \cite{Loring_deformations}. This also 
may provide some novel generic numerical methods to study and compute normal matrix compressions, sparse representations and 
dimensionality reduction of large scale matrices. Using a similar approach we plan to use T.\ref{lifted_local_connection_of_N_tuples_of_normal_contractions} and 
L.\ref{UJC_connectivity} to answer some questions in topologically controlled linear algebra in the sense of \cite{CLA_Freedman}, 
raised by M. H. Freedman.

The construction and generalization of detection methods like the 
ones mentioned in the remark R.\ref{detection_methods} of theorem T.\ref{lifted_local_connection_of_N_tuples_of_normal_contractions} 
together with their implications on inverse spectral problems, will be the subject of future studies. In particular we will use 
toral matrix links to study the local deformation properties of matrix representations of the form 
$C_{\varepsilon}(\TT) \rtimes_{\alpha} \ZZ/2\to M_n$ (where $\alpha$ denotes the standard flip) via {\em softened 
matrix Klein bottles}. These problems are related to spectral decomposition problems with {\em spectral symmetry} in quantum theory and to {\em deformation theory} 
for C$^*$-algebras in the sense of Loring \cite{Loring_deformations}. We will also use toroidal matrix links to study the local 
connectivity of some Soft group C$^*$-algebras in the sense of Farsi \cite{Soft_Algebras_Farsi}.

Some generalizations of T.\ref{lifted_local_connection_of_N_tuples_of_normal_contractions} and particular applications of 
L.\ref{UJC_connectivity} to the study of matrix equations on words will also be the subject of future study. In particular, 
the combination of toroidal matrix links with some matrix lifting techniques along the same lines of the proof of 
T.\ref{lifted_local_connection_of_N_tuples_of_normal_contractions} combined with L.\ref{UJC_connectivity}, seem also promising on the 
solvability of some conjectures studied numerically on \cite{Norms_of_Commutators}.

\section{Acknowledgement}
Both authors are very grateful with the Erwin Schr{\"o}dinger Institute for Mathematical Physics of the University of Vienna, for the outstanding hospitality during our visit to participate in the research program on Topological phases of quantum matter in 
August of 2014.  Much of the research reported in this document was carried out while we were visiting the Institute. 
The second author wants to thank Moody T. Chu for his warm hospitality during his visit to the Department of Mathematics at North 
Carolina State, for precise comments, challenging questions, and for sharing interesting conjectures and problems with him. He also wants 
to thank Alexandru Chirvasitu for several interesting questions and comments that have been very helpful for the preparation of this document.

This work was partially supported by a grant from the Simons Foundation (208723 to Loring).

\end{document}